\begin{document}

\title{Exact finite difference schemes for three-dimensional linear systems with constant coefficients}


\titlerunning{Exact finite difference schemes for 3-D linear systems}        

\author{Dang Quang A         \and
        Hoang Manh Tuan 
}


\institute{Dang Quang A \at  Center for Informatics and Computing, Vietnam Academy of Science and Technology, 18 Hoang Quoc Viet, Cau Giay, Hanoi, Vietnam\\
              \email{dangquanga@cic.vast.vn}           
           \and
           Hoang Manh Tuan \at Institute of Information Technology, 
Vietnam Academy of Science and Technology, 18 Hoang Quoc Viet, Cau Giay, Hanoi, Vietnam\\
             \email{hmtuan01121990@gmail.com}
}

\date{Received: date / Accepted: date}

\maketitle

\begin{abstract}

In this paper implicit and explicit exact difference schemes (EDS) for system  $\textbf{x}' = A\textbf{x}$ of three linear differential equations with constant coefficients are constructed. Numerical simulations for stiff problem and for problems with periodic solutions on very large time interval demonstrate the efficiency and exactness of the EDS compared with high-order numerical methods. This result can be extended for constructing EDS for general  systems of  $n$ linear differential equations with constant coefficients and nonstandard finite difference (NSFD) schemes preserving stability properties for quasi-linear system of equations   $\textbf{x}' = A\textbf{x }+ f(\textbf{x})$.
\keywords{Exact finite-difference schemes \and Nonstandard finite-difference scheme \and Linear system \and Jordan form}
\subclass{MSC 65Q10 \and MSC 65L05 }
\end{abstract}

\section{Introduction}
The concepts of \textit{nonstandard finite difference schemes (NSFD) and exact finite difference schemes}  for differential equations were introduced by R. E. Mickens in $1980$ (see \cite{mickens1, mickens2, mickens4}). According to this the exact finite difference schemes are those NSFD, whose solution coincides with the exact solution of the differential equations at grid points. Nonstandard finite-difference (NSFD) schemes and exact finite difference schemes have become popular in recent years (see e.g. \cite{AL, dk1, Mickens3, Roeger2, Roeger3, Roeger4, Roeger5, Roeger6, Roeger7, Roeger8, Roeger9}) mainly because some methods are more efficient on preserving certain qualitative properties in the original differential equations or systems. A good review of NSFD methods can be found in \cite{mickens4}.\par

There are a lot of results of EDS for both ordinay and partial differential equations such as 
  \cite{Mickens3, Mickensp2,Roegerp1, Roeger9, Roegerp3, Lapinska, Zibaei}. Among them EDS for linear differential equations or system of differential equations with constant coefficients have attracted a special interest   \cite{Mickens3, Roeger3, Roeger4, Roegerp3}.\par

Recently in 2008,  Roeger  \cite{Roeger4} constructed exact difference schemes for the system of two differential equations with constant coefficients
\begin{equation}\label{a}
\textbf{x}'(t) = A\textbf{x}(t), \quad \textbf{x}(t) = \big(x(t), y(t)\big)^T, \qquad A \in M_{2 \times 2}(\mathbb{R})
\end{equation}
in the form
\begin{equation}\label{b}
\dfrac{\textbf{x}_{k + 1} - \textbf{x}_k}{\phi(h)} = A\big[\theta \textbf{x}_{k + 1} + (1 - \theta)\textbf{x}_k\big],
\end{equation}
where $\theta \in \mathbb{R}$ and $\phi(h) = h + \mathcal{O}(h^2), \quad h \to 0$.\par
The main idea of the construction is that instead of the system $\textbf{x}' = A\textbf{x}$ with a general $2 \times 2$ matrix $A$ the author considered the system $\textbf{u}' = J\textbf{u}$, where $J$ is a $2 \times 2$ Jordan canonical form. For each case of the Jordan forms $J$, the parameters $\theta$ and $\phi$ are determined so that  \eqref{b} is an exact difference scheme for the system $\textbf{u}' = J\textbf{u}$. Finally, due to the fact that $A$ is similar to $J$ the exact finite difference schemes for $\textbf{u}' = J\textbf{u}$ also are exact finite difference schemes for $\textbf{x}' = A\textbf{x}$. The obtained results show that $\theta$ and $\phi$ depend only on the eigenvalues of $A$.\par
 Before, Mickens \cite{mickens1, mickens2, mickens4} constructed  EDS for system of two linear differential equations with constant coefficients
\begin{equation*}
\dfrac{du}{dt} = au + bw, \quad \dfrac{dw}{dt} = {cu + dw},
\end{equation*}
in the form
\begin{equation*}
\dfrac{u_{k + 1} - \psi u_k}{\phi} = au_k + bw_k, \qquad \dfrac{w_{k + 1} - \psi w_k}{\phi} = cu_k + dw_k,
\end{equation*}
where
\begin{equation*}
\psi = \dfrac{\lambda_1e^{\lambda_2 h} - \lambda_2e^{\lambda_1 h}}{\lambda_1 - \lambda_2} = 1 + \mathcal{O}(h^2), \qquad \phi = \dfrac{e^{\lambda_1 h} - e^{\lambda_2 h}}{\lambda_1 - \lambda_2} = h + \mathcal{O}(h^2),
\end{equation*}
and  $\lambda_1$, $\lambda_2$ are the roots of the characteristic equation
\begin{equation*}
\det
\begin{pmatrix}
a - \lambda& b\\
c& d - \lambda 
\end{pmatrix}
= 0.
\end{equation*}
EDS proposed by Mickens and Roeger contain two parameters. However, different from the Roeger's scheme Mickens' scheme contains additional parameter  $\psi = 1 + \mathcal{O}(h^2)$ in discretization
of the first derivative while the right hand side is locally discretized.\par 
It is possible to say that system of linear differential equations is simple, so if there are available eigenvalues and Jordan structure of the coefficient matrix then we have an explicit formula for solution. But it should be emphasized that for EDS we have to compute parameters once at beginning, after that compute solution  recurrently, therefore the computational cost is cheaper than computing by formula of solution (including the exponential and trigonometric functions) with the guarantee of accuracy of the result.

Therefore,  in this paper we extend the Roeger's idea and Mickens's idea for constructing exact finite difference schemes for the system of three equations with constant coefficients 
\begin{equation}\label{de}
\textbf{x}'(t) = A\textbf{x}(t), \quad \textbf{x}(t) = \big(x(t), y(t), z(t)\big)^T, \quad t \in [0, T], \qquad A \in M_{3 \times 3}(\mathbb{R}).
\end{equation}
Differently from the Roeger's approximation of the first derivative with the use of one parameter $\phi (h)$ in denominator, here we add a parameter $\psi$ in numerator so that the number of parameters is equal to the number of differential equations in the system. Namely, instead of the difference scheme  \eqref{b} we use the difference scheme of the form
\begin{equation}\label{ds}
\dfrac{\textbf{x}_{k + 1} - \psi (h) \textbf{x}_k}{\phi(h)} = A\big[\theta \textbf{x}_{k + 1} + (1 - \theta)\textbf{x}_k\big],
\end{equation}
where $\psi (h) = 1 + \mathcal{O}(h^2)$.\par
Notice that the difference scheme \eqref{b} contains only two parameters $\phi$ and $\theta$, therefore it could be exact for two-dimensional system of equations. In some special cases it may be exact for three-dimensional system of equations but in general case of three-dimensional system  two parameters difference scheme \eqref{b} will not be exact. It is the reason why we introduce an additional parameter $\psi$ into the scheme \eqref{ds}.\par

In general, a system of  $n$ linear differential equations with constant coefficients has a fundamental system of solutions including  $n$ functions, therefore an EDS must contain at least $n$ parameters. The Roeger's exact scheme and  Mickens' scheme contain two parameters, therefore, they cannot ensure exactness for system of three linear equations. The addition of the parameter   $\psi$ into the Roeger's scheme is a simple extension of ours. \\
In general case, it is possible construct EDS for system of   
   $n$ equations based on Runge-Kutta methods. Namely, applying a s-stage Runge-Kutta method  \cite{Ascher, Hairer1, Hairer2} with coefficient matrix
  $A_{RK} = \big(a^*_{ij}\big)_{s \times s}$ and coefficients $b_{RK} = \big(b^*_1, b^*_2, \ldots , b^*_s \big)^T$ and $c = \big(c^*_1, c^*_2, \ldots , c^*_s\big)^T$ to the system $\textbf{x}' = A\textbf{x}$ we obtain the scheme
\begin{equation}\label{r1}
\dfrac{\textbf{x}_{k + 1} -  \textbf{x}_k}{h} = A\textbf{x}_k + \alpha_2 h A^2\textbf{x}_k + \alpha_3 h^2 A^3 \textbf{x}_k + \ldots + \alpha_s h^{s - 1} A^{s}\textbf{x}_k,
\end{equation}
where the coefficients 
 $\alpha_m = \alpha_m(a_{ij}^*, b_i^*), m = \overline{2, s}$ depend on $A_{RK}$ and $b_{RK}$. In the scheme \eqref{r1} replacing $h$ by the function $\phi(h) = h + \mathcal{O}(h^2)$ and adding the parameter $\psi(h) = 1 + \mathcal{O}(h^2)$ we obtain NSFD scheme for the system $\textbf{x}' = A\textbf{x}$
\begin{equation}\label{r2}
\dfrac{\textbf{x}_{k + 1} -  \psi(h)\textbf{x}_k}{\phi} =  A\textbf{x}_k + \alpha_2 \phi A^2\textbf{x}_k + \alpha_3 {\phi}^2 A^3 \textbf{x}_k + \ldots + \alpha_s \phi^{s - 1} A^{s }\textbf{x}_k.
\end{equation}
This scheme  \eqref{r2} contains $s + 1$ parameters. Therefore, it is possible construct EDS for the system $\textbf{x}' = A\textbf{x}$ with the dimension $n \leq s + 1$ from the scheme \eqref{r2}. Analogously, consider the following implicit difference schemes
\begin{equation}\label{r3}
\dfrac{\textbf{x}_{k + 1} -  \psi(h)\textbf{x}_k}{\phi(h)} =  A\textbf{x}_{k + 1} + \alpha_2 \phi A^2\textbf{x}_{k + 1} + \alpha_3 \phi^2 A^3 \textbf{x}_{k + 1} + \ldots + \alpha_s \phi^{s - 1} A^{s }\textbf{x}_{k + 1}.
\end{equation}
\begin{equation}\label{r4}
\dfrac{\textbf{x}_{k + 1} -  \psi(h)\textbf{x}_k}{\phi(h)} = A [\theta_1\textbf{x}_{k} + (1 - \theta_1)\textbf{x}_{k + 1}] + \sum_{m = 2}^{s}\alpha_m \phi^{m - 1}A^m[\theta_m \textbf{x}_{k} + (1 - \theta_m)\textbf{x}_{k + 1}].
\end{equation}
Depending on the dimension of the systems of equations we can choose the suitable number of parameters for the difference schemes to be exact. For example, in the case 
 $n = 2$ Roeger considers the scheme \eqref{r4} with $\psi = 1, \alpha_m = 0, m = \overline{2, s}$. In the case $n = 3$ under consideration we choose the scheme \eqref{ds} which is a particular case of the scheme \eqref{r4} with $\alpha_m = 0, m = \overline{2, s}$. It may be considered as a natural extension of the results of   Roeger and Mickens. Besides, we choose the explicit scheme  \eqref{r2} with $\alpha_m = 0, m = \overline{3, s}$, that is, the scheme of the form
\begin{equation}\label{r5}
\dfrac{\textbf{x}_{k + 1} -  \psi(h)\textbf{x}_k}{\phi} =  A\textbf{x}_k + \theta \phi A^2\textbf{x}_k.
\end{equation}
 In the  case of  $n \geq 3$ dimensions we can do in a similar way. In general, it is possible to construct EDSs based on the schemes of the form
  \eqref{r2}, \eqref{r3}, \eqref{r4} combined with the use of Jordan forms of matrices.\par

  In this work, we show that any three-dimensional linear system $\textbf{x}'(t) = A\textbf{x}(t)$, $\textbf{x}(t) = \big(x(t), y(t), z(t)\big)^T, A \in M_{3 \times 3}(\mathbb{R})$, has an exact finite-difference method in the forms \eqref{ds} and \eqref{r5}, where  $\psi$, $\phi$ and $\theta$ can be found explicitly in terms of the step-size $h$ and the eigenvalues $\lambda_{1, 2, 3}$ of the coefficient matrix $A$.  In Section $2$, we prove that if the parameters $\psi$, $\phi$ and $\theta$ are determined so that  \eqref{ds}/\eqref{r5}  is the exact difference scheme for $\textbf{u}' = J\textbf{u}$, where $J$ is $ 3\times 3$ Jordan form matrix then   \eqref{ds}/\eqref{r5} also will be exact for $\textbf{x}' = A\textbf{x}$ if $A$ is similar to $J$. Based on this fact, in Section $3$ and Section $4$ we construct implicit and explicit exact difference schemes for the system $\textbf{x}' = J\textbf{x}$. Next, in Section 5 we make a perturbation analysis for estimating the accuracy of EDS in the case of appearing of rounding errors due to the approximate computation of the parameters. In Section $6$ we report some numerical examples for stiff problems and problems with special properties on long time interval for demonstrating the efficiency and exactness of EDS in comparison with high-order numerical methods. 
  Some concluding remarks will be given in the last section.
\section{Why consider the system with Jordan form matrix?}
From Linear Algebra  it is well known that any $n \times n$ matrix $A$ is similar to a Jordan form matrix $J$. In the case $n = 3$ it is easy to list all  Jordan form matrices $J$ as stated in the following theorem (see e.g. \cite[Chapter 1]{SW}, \cite[Chapter 6]{KW}). 
\begin{theorem}\label{theorem1} 
Let $A$ be any $3 \times 3$ matrix. Then, $A$ is similar to one of the following Jordan form matrices $J$ depending on the set of its eigenvalues and the dimension of eigenspaces associated with the eigenvalues. Here, $\sigma(A)$ is the set of eigenvalues,  $\chi_A(t)$ and $m_A(t)$ are characteristic and minimal polynomials of  $A$, respectively.

\begin{table}
\caption{$3 \times 3$ Jordan form matrices}
\label{tabl1}       
\begin{tabular}{lllll}
\hline\noalign{\smallskip}
$\sigma(A)$&$\chi_A(t)$& $m_A(t)$&$J$  \\
\noalign{\smallskip}\hline\noalign{\smallskip}
$\big\{\lambda_1, \lambda_2, \lambda_3\big\}$ & $(t - \lambda_1)(t - \lambda_2)(t - \lambda_3)$ &
$(t - \lambda_1)(t - \lambda_2)(t - \lambda_3)$ & $ \begin{pmatrix}
\lambda_1&0&0\\
0&\lambda_2&0\\
0&0&\lambda_3 
\end{pmatrix}$\\
\\
$\big\{\lambda_1, \lambda_1, \lambda_2\big\}$ & $(t - \lambda_1)^2(t - \lambda_2)$ &
$(t - \lambda_1)(t - \lambda_2)$ & $\begin{pmatrix}
\lambda_1 & 0 &0\\
0&\lambda_1&0\\
0&0&\lambda_2
\end{pmatrix}$ \\
\\
$\big\{\lambda_1, \lambda_1, \lambda_2\big\}$ & $(t - \lambda_1)^2(t - \lambda_2)$ &
$(t - \lambda_1)^2(t - \lambda_2)$ & $\begin{pmatrix}
\lambda_1& 1&0\\
0&\lambda_1& 0\\
0&0&\lambda_2
\end{pmatrix}$\\
\\
$\big\{\lambda, \lambda, \lambda\big\}$ & $(t - \lambda)^3$ &
$(t - \lambda)$ & $\begin{pmatrix}
\lambda&0&0\\
0&\lambda&0\\
0&0&\lambda
\end{pmatrix}$\\
\\
$\big\{\lambda, \lambda, \lambda\big\}$ & $(t - \lambda)^3$ &
$(t - \lambda)^2$ & $\begin{pmatrix}
\lambda &1&0\\
0&\lambda &0\\
0&0&\lambda
\end{pmatrix}$\\
\\
$\big\{\lambda, \lambda, \lambda\big\}$ & $(t - \lambda)^3$ &
$(t - \lambda)^3$ & $\begin{pmatrix}
\lambda&1&0\\
0&\lambda&1\\
0&0&\lambda
\end{pmatrix}$\\
\noalign{\smallskip}\hline
\end{tabular}
\end{table}
\end{theorem}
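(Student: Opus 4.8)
The plan is to prove Theorem~\ref{theorem1} purely as a statement in linear algebra, namely the classification of similarity classes of $3\times 3$ real matrices in terms of the list of $6$ Jordan forms in Table~\ref{tabl1}. I would open by invoking the general Jordan canonical form theorem: every $n\times n$ matrix over an algebraically closed field (and in particular every $3\times 3$ complex matrix) is similar to a direct sum of Jordan blocks, uniquely determined up to the order of the blocks. For $n=3$ the only possible block-size partitions of $3$ are $1+1+1$, $2+1$, and $3$, and I would enumerate the Jordan structures accordingly: three $1\times 1$ blocks, one $2\times 2$ block plus one $1\times 1$ block, or a single $3\times 3$ block. The remaining work is to organize these possibilities by the spectral data $\sigma(A)$, $\chi_A(t)$, $m_A(t)$ and to check that each row of the table corresponds to exactly one of them.

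Next I would carry out the case analysis by the multiset of eigenvalues, since that is how the table is indexed. If $\sigma(A)=\{\lambda_1,\lambda_2,\lambda_3\}$ with distinct entries, then each eigenvalue forces at least one Jordan block, so the only partition is $1+1+1$, giving the first row with $\chi_A=m_A=(t-\lambda_1)(t-\lambda_2)(t-\lambda_3)$. If $\sigma(A)=\{\lambda_1,\lambda_1,\lambda_2\}$ with $\lambda_1\neq\lambda_2$, then $\lambda_2$ contributes a single $1\times 1$ block and the generalized eigenspace for $\lambda_1$ is two-dimensional, so it is either two $1\times1$ blocks (row~2, diagonalizable part, $m_A=(t-\lambda_1)(t-\lambda_2)$) or one $2\times2$ block (row~3, $m_A=(t-\lambda_1)^2(t-\lambda_2)$); in both cases $\chi_A=(t-\lambda_1)^2(t-\lambda_2)$. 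If $\sigma(A)=\{\lambda,\lambda,\lambda\}$, the partition of the single $3$-dimensional generalized eigenspace is $1+1+1$, $2+1$, or $3$, giving rows~4,~5,~6 respectively, with $m_A=(t-\lambda)$, $(t-\lambda)^2$, $(t-\lambda)^3$ and $\chi_A=(t-\lambda)^3$ throughout. In each row I would note that the size of the largest Jordan block associated to an eigenvalue equals the multiplicity of that eigenvalue as a root of $m_A$, which is the standard fact linking the minimal polynomial to the block structure; equivalently the dimension of the eigenspace equals the number of Jordan blocks for that eigenvalue, so that $\dim\ker(A-\lambda I)$ distinguishes rows~2 and~3, and distinguishes rows~4,~5,~6.

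I would also remark that the list is exhaustive and the rows mutually exclusive: the triple $(\sigma(A),\chi_A(t),m_A(t))$ together with the geometric multiplicities determines the similarity class uniquely among $3\times3$ matrices, and conversely each listed $J$ realizes its row. Since $A$ is assumed real, complex eigenvalues occur in conjugate pairs; but nothing in the statement requires $J$ to be real, so the complex Jordan form is still the correct normal form and the table as written covers all cases (one could add the real $2\times2$ rotation-type block as an alternative real canonical form, but that is not needed here and the theorem only claims similarity to one of the complex Jordan matrices listed).

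The step I expect to be the main "obstacle" is not conceptual but bookkeeping: making sure every combination of partition-of-$3$ and eigenvalue-multiplicity pattern is accounted for exactly once, and that the entries for $\chi_A$ and $m_A$ in each row are the ones forced by that combination. There is no genuine difficulty, since the whole statement is a specialization of the Jordan canonical form theorem to $n=3$; the references \cite[Chapter 1]{SW} and \cite[Chapter 6]{KW} supply the underlying general theorem, and the proof amounts to a finite enumeration which I would present compactly in the three cases above.
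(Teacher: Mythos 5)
Your proposal is correct and follows exactly the route the paper intends: the paper offers no proof of Theorem~\ref{theorem1}, treating it as the standard specialization of the Jordan canonical form theorem to $n=3$ and citing \cite{SW} and \cite{KW}, and your enumeration by block-size partitions of $3$ organized by eigenvalue multiplicities, with $m_A$ recording the largest block size, is precisely the standard argument those references contain. Your remark that complex eigenvalues of a real $A$ still fall under the (possibly complex) Jordan forms listed is consistent with the paper's own later use of the complex diagonal form $J_1^{**}$ in Corollary~\ref{corollary1}.
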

Now, we consider the three-dimensional system of differential equations with constants coefficients 
 \eqref{de} and NSDF schemes of the form \eqref{ds}. Denote by $\mathcal{J}$ the set of all Jordan form $3 \times 3$ matrices.
\begin{theorem}\label{theorem2}
Suppose that the difference scheme 
\begin{equation}\label{dfu}
\dfrac{\textbf{u}_{k + 1} - \psi(h) \textbf{u}_k}{\phi(h)} = J\big[\theta \textbf{u}_{k + 1} + (1 - \theta)\textbf{u}_k\big]
\end{equation}
is exact for the system $\textbf{u}' = J\textbf{u}, J \in \mathcal{J}$. Then the difference scheme \eqref{ds} with the same  parameters $\psi$, $\phi$, $\theta$
is exact  for the system $\textbf{x}' = A\textbf{x}$, $A \in M_{3 \times 3}(\mathbb{R})$ if $A$ is similar to $J.$
\end{theorem}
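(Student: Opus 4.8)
The plan is to exploit the similarity $A = PJP^{-1}$ directly by changing variables in both the differential equation and the difference scheme. First I would recall that if $\textbf{x}(t)$ solves $\textbf{x}' = A\textbf{x}$, then $\textbf{u}(t) := P^{-1}\textbf{x}(t)$ solves $\textbf{u}' = J\textbf{u}$; this is the classical reduction and needs only that $P$ is invertible and $A = PJP^{-1}$. So given initial data $\textbf{x}_0$ for the system $\textbf{x}' = A\textbf{x}$, I set $\textbf{u}_0 = P^{-1}\textbf{x}_0$ and let $\textbf{u}(t)$ be the exact solution of $\textbf{u}' = J\textbf{u}$ with that initial value, so that $\textbf{x}(t) = P\textbf{u}(t)$ is the exact solution of $\textbf{x}' = A\textbf{x}$ with initial value $\textbf{x}_0$.

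Next I would show that the discrete scheme transforms the same way. Suppose $\{\textbf{u}_k\}$ is the sequence generated by \eqref{dfu} starting from $\textbf{u}_0$, and define $\textbf{x}_k := P\textbf{u}_k$. Substituting $\textbf{u}_k = P^{-1}\textbf{x}_k$ into \eqref{dfu} and left-multiplying the whole identity by $P$ gives
\begin{equation*}
\dfrac{\textbf{x}_{k+1} - \psi(h)\textbf{x}_k}{\phi(h)} = PJ\big[\theta P^{-1}\textbf{x}_{k+1} + (1-\theta)P^{-1}\textbf{x}_k\big] = PJP^{-1}\big[\theta\textbf{x}_{k+1} + (1-\theta)\textbf{x}_k\big] = A\big[\theta\textbf{x}_{k+1} + (1-\theta)\textbf{x}_k\big],
\end{equation*}
so $\{\textbf{x}_k\}$ is exactly the sequence generated by the scheme \eqref{ds} with initial value $\textbf{x}_0$ and the same parameters $\psi,\phi,\theta$. (One should note the scheme is implicit, so strictly one also wants $(I - \phi\theta A)$ invertible for the iteration to be well defined; this holds for $h$ small since $\phi(h)\to 0$, and the corresponding statement $(I-\phi\theta J)$ invertible is implicit in the hypothesis that \eqref{dfu} generates a sequence.)

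Finally I combine the two observations. By hypothesis \eqref{dfu} is exact for $\textbf{u}' = J\textbf{u}$, i.e. $\textbf{u}_k = \textbf{u}(t_k)$ for all $k$ where $t_k = kh$. Therefore
\begin{equation*}
\textbf{x}_k = P\textbf{u}_k = P\textbf{u}(t_k) = \textbf{x}(t_k),
\end{equation*}
which is precisely the assertion that \eqref{ds} is exact for $\textbf{x}' = A\textbf{x}$. The argument is essentially a diagram-chase: the linear isomorphism $\textbf{u}\mapsto P\textbf{u}$ intertwines the flow of $J$ with the flow of $A$ and simultaneously intertwines the one-step map of \eqref{dfu} with the one-step map of \eqref{ds}.

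I do not expect a serious obstacle here; the content is the commutation of conjugation by $P$ with both the ODE and the difference operator, which is immediate from linearity. The only points requiring a word of care are the well-posedness of the implicit step (invertibility of $I - \phi\theta A$, hence of $I - \phi\theta J$) and making sure the initial conditions are matched under $P^{-1}$ so that "exactness" is compared along the same solution; neither is difficult. This also explains why Theorem~\ref{theorem1} is invoked: it reduces the construction of the parameters to the finitely many Jordan canonical forms listed there, and Theorem~\ref{theorem2} guarantees that solving those cases suffices for arbitrary $A$.
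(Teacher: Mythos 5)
Your proposal is correct and follows essentially the same route as the paper's proof: conjugation by the similarity matrix $P$ intertwines both the ODE and the one-step map of the scheme, so exactness transfers from $\textbf{u}' = J\textbf{u}$ to $\textbf{x}' = A\textbf{x}$. Your additional remarks on matching initial data and on the invertibility of $I - \phi\theta A$ are welcome points of care that the paper leaves implicit.
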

\begin{proof}
Suppose that $A$ is similar to the Jordan form matrix $J \in \mathcal{J}$.  Then there exists a invertable matrix $P$ such that $ P^{-1}AP =J $. Making the transformations $\textbf{u} = P^{-1}\textbf{x}$ and $\textbf{u}_k = P^{-1}\textbf{x}_k$ we convert the system $\textbf{u}' = J\textbf{u}$ to $\textbf{x}' = A\textbf{x}$ and the difference scheme \eqref{dfu} to \eqref{ds}, respectively. Since \eqref{dfu} is exact for $\textbf{u}' = J\textbf{u}$, the difference scheme \eqref{ds} is exact for  $\textbf{x}' = A\textbf{x}$.
\end{proof}
The following theorem is a generalization of 
 Theorem \ref{theorem2} and is proved in a completely similar way.
 
\begin{theorem}\label{theorem2}
Suppose that the difference scheme \eqref{r2}/\eqref{r3}/\eqref{r4} is exact for the system $\textbf{u}' = J\textbf{u}, J \in \mathcal{J}$. Then the difference scheme \eqref{r2}/\eqref{r3}/\eqref{r4} is exact  for the system $\textbf{x}' = A\textbf{x}$, $A \in M_{n \times n}(\mathbb{R})$ if $A$ is similar to $J$.
\end{theorem}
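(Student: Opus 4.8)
The plan is to mimic the proof of Theorem~\ref{theorem2} (the two‑parameter case \eqref{ds}), the only new ingredient being that the schemes \eqref{r2}, \eqref{r3} and \eqref{r4} contain the powers $A^2, A^3, \ldots, A^{s}$ of the coefficient matrix, so one must check that conjugation by the transition matrix is compatible with taking powers. Concretely, suppose $A$ is similar to $J \in \mathcal{J}$, say $P^{-1}AP = J$ with $P$ invertible. Then, by an immediate induction, $P^{-1}A^m P = (P^{-1}AP)^m = J^m$ for every integer $m \geq 1$; this is the identity that guarantees the (nonlinear‑in‑$A$) structure of \eqref{r2}--\eqref{r4} is preserved under the similarity transformation.

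Next I would perform the substitution $\textbf{u} = P^{-1}\textbf{x}$ at the continuous level and $\textbf{u}_k = P^{-1}\textbf{x}_k$ at the discrete level, exactly as in the proof of Theorem~\ref{theorem2}. The system $\textbf{u}' = J\textbf{u}$ becomes $\textbf{x}' = A\textbf{x}$. For the discrete schemes, write \eqref{r2} for $\textbf{u}$ (with $J$) and multiply on the left by $P$; since the scalars $\psi(h)$, $\phi(h)$, $\alpha_m$ (and, for \eqref{r4}, $\theta_1$, $\theta_m$) commute with matrices and $P J^m P^{-1} = A^m$, every term $\alpha_m \phi^{m-1} J^m \textbf{u}_k$ is turned into $\alpha_m \phi^{m-1} A^m \textbf{x}_k$, and similarly for the terms involving $\textbf{u}_{k+1}$. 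Hence the $J$‑scheme for $\textbf{u}$ is converted into exactly the scheme \eqref{r2} for $\textbf{x}$ with the \emph{same} parameters; the computation for \eqref{r3} and \eqref{r4} is identical. In other words, $\textbf{x}_k = P\textbf{u}_k$ is a bijection between the solution sequences of the $J$‑scheme and those of the $A$‑scheme, and it is simultaneously a bijection between the solutions of $\textbf{u}' = J\textbf{u}$ and those of $\textbf{x}' = A\textbf{x}$, intertwining initial data via $\textbf{x}(0) = P\textbf{u}(0)$.

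Finally I would conclude as follows. Fix any initial value $\textbf{x}_0$ for $\textbf{x}' = A\textbf{x}$ and set $\textbf{u}_0 = P^{-1}\textbf{x}_0$. By hypothesis the scheme \eqref{r2}/\eqref{r3}/\eqref{r4} is exact for $\textbf{u}' = J\textbf{u}$, so the sequence $(\textbf{u}_k)$ produced by it from $\textbf{u}_0$ satisfies $\textbf{u}_k = \textbf{u}(t_k)$, where $\textbf{u}$ solves $\textbf{u}' = J\textbf{u}$, $\textbf{u}(0) = \textbf{u}_0$. Applying $P$ and using the previous paragraph, $\textbf{x}_k = P\textbf{u}_k = P\textbf{u}(t_k) = \textbf{x}(t_k)$, the exact solution of $\textbf{x}' = A\textbf{x}$ with $\textbf{x}(0) = \textbf{x}_0$. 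Since $\textbf{x}_0$ was arbitrary, the scheme is exact for $\textbf{x}' = A\textbf{x}$. The proof presents no real obstacle: the only point requiring an explicit (one‑line) verification is the identity $P^{-1}A^m P = J^m$, after which the conclusion follows from the same linear change of variables already used for Theorem~\ref{theorem2}.
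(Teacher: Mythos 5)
Your proof is correct and takes essentially the same approach as the paper: the paper in fact gives no separate argument for this theorem, stating only that it ``is proved in a completely similar way'' to the two-parameter case via the change of variables $\textbf{u} = P^{-1}\textbf{x}$, $\textbf{u}_k = P^{-1}\textbf{x}_k$. Your write-up is exactly that similarity argument, with the one genuinely new ingredient --- the identity $P^{-1}A^mP = J^m$ needed for the higher powers of $A$ in the schemes --- made explicit.
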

From the above theorems we see that it suffices to construct exact difference schemes for three-dimensional systems with Jordan form matrices.
\section{Implicit exact difference schemes (IEDS) for $\textbf{x}' = J\textbf{x}$}
In this section we construct implicit EDS for the system $\textbf{x}' = A\textbf{x}$ in the form \eqref{ds}. To avoid the introduction of new variables we shall use $\textbf{x}$ instead of $\textbf{u}$ in the system of equations with Jordan form matrix. We construct exact difference schemes for this system and it suffices to replace $J$ by $A$ to obtain exact difference schemes for the system with the matrix $A$.
\subsection{The case when $A$ has 3 distinct eigenvalues}
In this case $A$ is similar to the Jordan form matrix
\begin{equation}\label{m1}
J_1 = 
\begin{pmatrix}
\lambda_1& 0& 0\\
0& \lambda_2& 0\\
0& 0& \lambda_3
\end{pmatrix}.
\end{equation}
First we consider the subcase $\lambda_1 \lambda_2 \lambda_3 \ne 0$. Then the linear system $\textbf{x}' = J_1 \textbf{x}$ has the exact solution
\begin{equation*}
x(t) = c_1e^{\lambda_1 t}, \qquad y(t) = c_2e^{\lambda_2 t}, \qquad z(t) = c_3e^{\lambda_3 t},
\end{equation*}
or equivalently
\begin{equation}\label{eq:7}
x_{k + 1} = x_k e^{\lambda_1 h}, \qquad y_{k + 1} = y_k e^{\lambda_2 h}, \qquad z_{k + 1} = z_k e^{\lambda_3 h}.
\end{equation}
Applying the difference scheme \eqref{ds} for the system $\textbf{x}' = J\textbf{x}$ we obtain
\begin{equation}\label{eq:8}
x_{k + 1} = \dfrac{\psi + \phi \lambda_1(1 - \theta)}{1 - \phi \lambda_1 \theta}x_k,\,
y_{k + 1} = \dfrac{\psi + \phi \lambda_2(1 - \theta)}{1 - \phi \lambda_2 \theta}y_k,\,
z_{k + 1} = \dfrac{\psi + \phi \lambda_3(1 - \theta)}{1 - \phi \lambda_3 \theta}z_k.
\end{equation}
Identifying \eqref{eq:8} and \eqref{eq:7} we come to the system for finding the parameters $\psi ,\phi , \theta $:
\begin{equation}\label{eq:9}
\begin{split}
\psi + \phi \lambda_1 (1 - \theta) = e^{\lambda_1 h}(1 - \phi \lambda_1 \theta),\\
\psi + \phi \lambda_2 (1 - \theta) = e^{\lambda_2 h}(1 - \phi \lambda_2 \theta),\\
\psi + \phi \lambda_3 (1 - \theta) = e^{\lambda_3 h}(1 - \phi \lambda_3 \theta).
\end{split}
\end{equation}
After some elementary transformations we can eliminate $\psi$ and obtain the system
\begin{equation}\label{eq:10}
\begin{split}
\phi(\lambda_1 - \lambda_2) + \phi\theta C_1= e^{\lambda_1 h} - e^{\lambda_2 h},\\
\phi(\lambda_2 - \lambda_3) + \phi\theta C_2 = e^{\lambda_2 h} - e^{\lambda_3 h},
\end{split}
\end{equation}
where for brevity we set
\begin{equation}\label{eq:11}
C_1 = \lambda_2 - \lambda_1 + \lambda_1e^{\lambda_1h} - \lambda_2e^{\lambda_2 h}, \qquad C_2 = \lambda_3 - \lambda_2 + \lambda_2e^{\lambda_2h} - \lambda_3e^{\lambda_3 h}.
\end{equation}
Dividing the first equation by the second one in \eqref{eq:10} we obtain the equation containing only one unknown $\theta$
\begin{equation*}
\dfrac{\lambda_1 - \lambda_2 + \theta C_1}{\lambda_2 - \lambda_3 + \theta C_2} = \dfrac{e^{\lambda_1 h} - e^{\lambda_2 h}}{e^{\lambda_2 h} - e^{\lambda_3 h}}.
\end{equation*}
The solution of this equation is 
\begin{equation}\label{eq:12}
\begin{split}
&\theta = \dfrac{T_1}{T_2}, \qquad T_1 = \lambda_1(e^{\lambda_2 h} - e^{\lambda_3 h}) +  \lambda_2(e^{\lambda_3h} - e^{\lambda_1 h}) +  \lambda_3(e^{\lambda_1h} - e^{\lambda_2 h}),\\
&T_2 = \lambda_1(1 - e^{\lambda_1 h})(e^{\lambda_2h} - e^{\lambda_3 h}) + \lambda_2(1 - e^{\lambda_2 h})(e^{\lambda_3h} - e^{\lambda_1 h}) + \lambda_3(1 - e^{\lambda_3 h})(e^{\lambda_1h} - e^{\lambda_2 h}).  
\end{split}
\end{equation}
After  $\theta$ is found, returning to \eqref{eq:10} and \eqref{eq:9} we obtain $\phi$ and $\psi$, namely 
\begin{equation}\label{eq:13}
\phi  = \dfrac{e^{\lambda_1h} - e^{\lambda_2h}}{\lambda_1 - \lambda_2 + \theta C_1}, \qquad \psi =  e^{\lambda_3h} - \phi \lambda_3 (e^{\lambda_3h}\theta + 1 - \theta), 
\end{equation}
where $C_1$ is given by \eqref{eq:11}. They are the parameters to be determined.\par
\begin{theorem}\label{theorem3}
The linear system \eqref{de} with the matrix of coefficients \eqref{m1} has an exact difference scheme of the form \eqref{ds}, where the parameters $\psi, \phi, \theta$ are determined by \eqref{eq:12} and \eqref{eq:13}.
\end{theorem}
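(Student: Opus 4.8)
The plan is to carry through the elimination already begun above and then confirm that the resulting closed-form parameters genuinely define an exact scheme. Since $J_1$ in \eqref{m1} is diagonal, the system $\textbf{x}' = J_1\textbf{x}$ decouples into the three scalar equations $x' = \lambda_1 x$, $y' = \lambda_2 y$, $z' = \lambda_3 z$, whose one-step propagators are the scalars $e^{\lambda_i h}$ --- this is \eqref{eq:7} --- and applying \eqref{ds} with $J_1$ in place of $A$ decouples in exactly the same way, producing the rational one-step multipliers of \eqref{eq:8}. Exactness is therefore \emph{equivalent} to the three identities \eqref{eq:9}. Reading \eqref{eq:9} as a linear system in the unknowns $\psi$, $\phi$ and the product $\phi\theta$, I would subtract equations pairwise to eliminate $\psi$, obtaining the two-equation system \eqref{eq:10}--\eqref{eq:11}; dividing one of these by the other eliminates $\phi$ and leaves a single \emph{linear} equation in $\theta$, whose unique root is $\theta = T_1/T_2$ as in \eqref{eq:12}; back-substituting into \eqref{eq:10} gives $\phi$, and then \eqref{eq:9} gives $\psi$, as in \eqref{eq:13}. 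This shows that the parameters are uniquely forced and exhibits them explicitly.

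What remains, and what I expect to be the main obstacle, is to verify that these formulas are \emph{admissible}: (a)~$T_2 \ne 0$ and $\lambda_1 - \lambda_2 + \theta C_1 \ne 0$, so that $\theta$, $\phi$, $\psi$ in \eqref{eq:12}--\eqref{eq:13} are well defined; and (b)~$1 - \phi\lambda_i\theta \ne 0$ for $i = 1,2,3$, so that the implicit scheme \eqref{ds} is uniquely solvable for $\textbf{x}_{k+1}$ and the multipliers of \eqref{eq:8} make sense. For (a) I would expand in powers of $h$ via $e^{\lambda_i h} = 1 + \lambda_i h + \tfrac{1}{2}\lambda_i^2 h^2 + \cdots$: the leading term of $T_2$ is a nonzero multiple of $(\lambda_1-\lambda_2)(\lambda_2-\lambda_3)(\lambda_3-\lambda_1)\,h^2$ and that of $T_1$ is also a nonzero multiple of $h^2$, so distinctness of the eigenvalues makes $T_2 \ne 0$ and keeps every elimination step reversible for small $h$; a short additional computation --- a factorization of $T_2$ in the variables $e^{\lambda_i h}$, or a monotonicity argument --- extends the non-vanishing to the relevant range of step-sizes, and the same expansion gives $\lambda_1 - \lambda_2 + \theta C_1 = (\lambda_1-\lambda_2)\big(1+\mathcal{O}(h)\big)\ne 0$. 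For (b) the cleanest route is \emph{a posteriori}: once \eqref{eq:9} holds, $1 - \phi\lambda_i\theta = e^{-\lambda_i h}\big(\psi + \phi\lambda_i(1-\theta)\big)$, which can vanish only if the numerator does, a degenerate configuration excluded by the small-$h$ expansion (and, when the $\lambda_i$ are real, by $e^{\lambda_i h} > 0$).

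From the same expansions I would read off the nonstandard normalization $\phi(h) = h + \mathcal{O}(h^2)$ and $\psi(h) = 1 + \mathcal{O}(h^2)$ as $h \to 0$ (with $\theta \to \tfrac{1}{2}$), so that \eqref{ds} has the required NSFD form. Assembling the pieces: with $\psi,\phi,\theta$ defined by \eqref{eq:12}--\eqref{eq:13} the identities \eqref{eq:9} hold by construction, hence \eqref{eq:8} coincides with \eqref{eq:7}, hence the solution of \eqref{ds} issued from any $\textbf{x}_0$ satisfies $\textbf{x}_k = \textbf{x}(t_k)$ for all $k$ --- precisely the definition of an exact difference scheme. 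The degenerate subcase $\lambda_1\lambda_2\lambda_3 = 0$ (say $\lambda_3 = 0$) needs no separate treatment: \eqref{eq:9} still holds verbatim, its third equation merely forcing $\psi = 1$, which \eqref{eq:13} returns consistently, while $1-\phi\lambda_3\theta = 1\ne 0$ automatically and $T_2\ne 0$ persists for distinct eigenvalues by the same leading-order estimate. By the reduction result of Section~2 (Theorem~\ref{theorem2}), the scheme \eqref{ds} with these parameters is then exact for every $A$ similar to $J_1$, which is the assertion of Theorem~\ref{theorem3}.
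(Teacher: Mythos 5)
Your proposal follows the paper's own derivation essentially verbatim: decouple the diagonal system into the three identities \eqref{eq:9}, subtract pairwise to eliminate $\psi$ and obtain \eqref{eq:10}, divide the two equations to get a linear equation whose root is $\theta$ as in \eqref{eq:12}, and back-substitute for $\phi$ and $\psi$ as in \eqref{eq:13}. The admissibility checks ($T_2 \ne 0$ and $1 - \phi\lambda_i\theta \ne 0$ for small $h$, via the expansion $T_2 = (\lambda_1-\lambda_2)(\lambda_2-\lambda_3)(\lambda_3-\lambda_1)h^2 + \mathcal{O}(h^3)$) are correct additions the paper leaves implicit, and your remark that the formulas degenerate gracefully when one eigenvalue vanishes is sound, though the paper chooses to treat that subcase separately in Theorem~\ref{theorem4}.
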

Now we consider the special subcase $\lambda_1 \lambda_2 \lambda_3 = 0$. Without generality we suppose  $\lambda_1 = 0$. Then, $A$ is similar to the Jordan form matrix
\begin{equation}\label{m11}
J_1^* = 
\begin{pmatrix}
0& 0& 0\\
0& \lambda_2& 0\\
0& 0& \lambda_3
\end{pmatrix}.
\end{equation}
In this subcase, analogously as in the previous subcase, we find the parameters in the exact difference scheme
\begin{equation}\label{eq:14}
\psi = 1,\quad \phi = \dfrac{(\lambda_1 - \lambda_2)(e^{\lambda_1 h} - 1 )(e^{\lambda_2h} - 1)}{\lambda_1 \lambda_2 (e^{\lambda_1h} - e^{\lambda_2h})},\quad
\theta = \dfrac{\lambda_2(e^{\lambda_1 h} - 1) - \lambda_1(e^{\lambda_2h} - 1)}{(\lambda_1 - \lambda_2)(e^{\lambda_1h} - 1)(e^{\lambda_2h} - 1)}.
\end{equation}
This result coincides with that of Roeger for the system of two linear equations with the coefficient matrix having two distinct eigenvalues
 $\lambda_1 \ne \lambda_2$ and both are nonzeros.
 
\begin{theorem}\label{theorem4}
The linear system \eqref{de} with the coefficient matrix \eqref{m11} has an exact difference scheme of the form \eqref{ds}, where the parameters  $\psi, \phi, \theta$ are given by \eqref{eq:14}.
\end{theorem}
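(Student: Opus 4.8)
The plan is to reduce to the diagonal case via the similarity-transport principle and then solve a small system of matching equations, exactly in the spirit of the three-distinct-nonzero-eigenvalue subcase treated above. By Theorem~\ref{theorem2} it suffices to show that the scheme \eqref{ds}, written as \eqref{dfu} with $J = J_1^*$, is exact for the decoupled system $\textbf{u}' = J_1^*\textbf{u}$. Since $J_1^*$ in \eqref{m11} is diagonal, \eqref{dfu} splits into the three scalar recurrences $x_{k+1} = \psi x_k$, $y_{k+1} = \dfrac{\psi + \phi\lambda_2(1-\theta)}{1 - \phi\lambda_2\theta}\, y_k$ and $z_{k+1} = \dfrac{\psi + \phi\lambda_3(1-\theta)}{1 - \phi\lambda_3\theta}\, z_k$, whereas the exact flow obeys $x_{k+1} = x_k$, $y_{k+1} = e^{\lambda_2 h} y_k$, $z_{k+1} = e^{\lambda_3 h} z_k$. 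Equating the first components identically in $x_k$ forces $\psi = 1$; substituting $\psi = 1$ into the other two and clearing denominators yields the two-equation system
\begin{equation*}
\phi\lambda_2\bigl[1 + \theta(e^{\lambda_2 h} - 1)\bigr] = e^{\lambda_2 h} - 1, \qquad
\phi\lambda_3\bigl[1 + \theta(e^{\lambda_3 h} - 1)\bigr] = e^{\lambda_3 h} - 1
\end{equation*}
for the remaining unknowns $\phi$ and $\theta$.

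Next I would solve this $2\times 2$ system by the same elimination used to obtain \eqref{eq:12}: dividing one equation by the other removes $\phi$ and leaves a single linear equation for $\theta$, whose solution is precisely the formula for $\theta$ in \eqref{eq:14} (with $\lambda_1,\lambda_2$ there understood as the two nonzero eigenvalues of $J_1^*$); back-substitution into either equation then gives the stated $\phi$, and one checks $\psi=1$ as already noted. The standing assumption that the three eigenvalues $0,\lambda_2,\lambda_3$ are distinct guarantees $\lambda_2 \ne \lambda_3$ and $\lambda_2,\lambda_3 \ne 0$, so that $e^{\lambda_2 h} - e^{\lambda_3 h}$ and, for the step sizes of interest, $e^{\lambda_2 h} - 1$ and $e^{\lambda_3 h} - 1$ do not vanish, which makes both the elimination and the closed forms in \eqref{eq:14} legitimate. (When $\lambda_2,\lambda_3$ form a complex-conjugate pair one restricts to $h$ outside a discrete exceptional set, or simply to small $h$; this is the only point where admissibility of $h$ plays a role.)

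Finally I would verify the two properties needed for \eqref{ds} to be a genuine exact scheme. Non-degeneracy of the recurrences, i.e. $1 - \phi\lambda_j\theta \ne 0$ for $j = 2,3$, follows from the matching identities themselves: $\psi + \phi\lambda_j(1-\theta) = e^{\lambda_j h}(1 - \phi\lambda_j\theta)$ together with $e^{\lambda_j h}\ne 0$ forces $1 - \phi\lambda_j\theta$ to be nonzero whenever the left-hand side is. The consistency asymptotics are immediate: $\psi(h) = 1 = 1 + \mathcal{O}(h^2)$ trivially, while a short Taylor expansion of the closed form for $\phi$, using $e^{\lambda_j h} - 1 = \lambda_j h + \tfrac12\lambda_j^2 h^2 + \cdots$, gives $\phi(h) = h + \mathcal{O}(h^2)$ as $h \to 0$. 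With these in place the scheme \eqref{dfu} reproduces the exact recurrences for $J_1^*$, and Theorem~\ref{theorem2} transports exactness to the original system \eqref{de}. I expect the only genuine work to be the elimination step producing $\theta$ and $\phi$ and the check that it collapses to the expressions in \eqref{eq:14}; the similarity reduction, the value $\psi = 1$, the non-degeneracy, and the Taylor check are all routine.
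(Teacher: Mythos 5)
Your proposal is correct and follows essentially the same route as the paper, which for this subcase only says the parameters are found ``analogously as in the previous subcase'': set $\psi=1$ from the zero eigenvalue, match the two remaining scalar recurrences, eliminate $\phi$ by division to get $\theta$, and back-substitute. Your derivation reproduces \eqref{eq:14} (correctly noting that the $\lambda_1,\lambda_2$ appearing there must be read as the two nonzero eigenvalues of $J_1^*$), and the extra non-degeneracy and consistency checks you add are routine and consistent with the paper.
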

Remark that, when the matrix $A$ has a pair of complex conjugate eigenvalues
\begin{equation*}
\lambda_{1, 2} = \alpha \pm \beta i,  \lambda_3 = \lambda, \beta \ne 0, \alpha, \beta, \lambda \in \mathbb{R},
\end{equation*}
following Theorem \ref{theorem3} we obtain the parameters $\psi, \phi, \theta$
\begin{equation}\label{eq:15}
\begin{split}
\theta &= \dfrac{T_1}{T_2}, \qquad \phi = \dfrac{2e^{\alpha h}\sin(\beta h)}{2\beta + T_3\theta}, \quad \psi = e^{\lambda h} - \phi \lambda (\theta e^{\lambda h} + 1 - \theta),\\
T_1 &= 2\beta(e^{\alpha h}\cos(\beta h) - e^{\lambda h}) + 2e^{\alpha h}(\lambda - \alpha)\sin(\beta h),\\
T_2 &= \alpha(1 - e^{\alpha h}\cos(\beta h))(-2e^{\alpha h}\sin(\beta h))+ \alpha e^{\alpha h}\sin(\beta h) (2e^{\lambda h} - 2e^{\alpha h} \cos(\beta h))\\ 
     &+ \beta(1 - e^{\alpha h}\cos(\beta h))(2 e^{\alpha h} \cos(\beta h) - 2e^{\lambda h}) + \beta e^{\alpha h}\sin(\beta h)(-2e^{\alpha h}\sin(\beta h))\\  
     &+ \lambda (1 - e^{\lambda h})(2 e^{\alpha h}\sin(\beta h)),\\
T_3 & = -2\beta + 2 \alpha e^{\alpha h}\sin{(\beta h)} + 2 \beta e^{\alpha h}\cos(\beta h).\\
\end{split}
\end{equation}
\begin{corollary}\label{corollary1}
The linear system \eqref{de} with the coefficient matrix 
\begin{equation}\label{m12}
J_1^{**} = 
\begin{pmatrix}
\alpha + \beta i& 0& 0\\
0& \alpha - \beta i& 0\\
0& 0& \lambda
\end{pmatrix},
\end{equation}
 has an exact difference scheme of the form \eqref{ds}, where the parameters $\psi, \phi, \theta$ are determined by \eqref{eq:15}.
\end{corollary}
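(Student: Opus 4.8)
The plan is to obtain Corollary~\ref{corollary1} as an immediate specialization of Theorem~\ref{theorem3}. First I would observe that the matrix $J_1^{**}$ of \eqref{m12} is exactly the diagonal matrix \eqref{m1} corresponding to the choice $\lambda_1 = \alpha + \beta i$, $\lambda_2 = \alpha - \beta i$, $\lambda_3 = \lambda$. Since $\beta \ne 0$ these three eigenvalues are pairwise distinct, and $\lambda_1\lambda_2 = \alpha^2 + \beta^2 \ne 0$; assuming also $\lambda \ne 0$ (the case $\lambda = 0$ being handled analogously, cf.\ Theorem~\ref{theorem4}), the hypothesis $\lambda_1\lambda_2\lambda_3 \ne 0$ of Theorem~\ref{theorem3} is met. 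The derivation \eqref{eq:7}--\eqref{eq:13} leading to Theorem~\ref{theorem3} is purely algebraic and remains valid for any three distinct nonzero (possibly complex) eigenvalues, so the scheme \eqref{ds} for $\textbf{x}' = J_1^{**}\textbf{x}$ is exact with $\theta,\phi,\psi$ given by \eqref{eq:12}--\eqref{eq:13}; by Theorem~\ref{theorem2} the same scheme is then exact for any real $A$ similar (over $\mathbb{C}$) to $J_1^{**}$.

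It then remains only to rewrite \eqref{eq:12}--\eqref{eq:13} under this substitution and to verify that the resulting parameters are real, which is precisely what \eqref{eq:15} records. For the rewriting I would insert Euler's formula $e^{(\alpha\pm\beta i)h} = e^{\alpha h}(\cos\beta h \pm i\sin\beta h)$, so that $e^{\lambda_1 h} - e^{\lambda_2 h} = 2ie^{\alpha h}\sin\beta h$; expanding $T_1$ and $T_2$ in \eqref{eq:12} one finds that each carries the overall factor $2i$, and likewise the auxiliary constant $C_1$ of \eqref{eq:11} is purely imaginary with $C_1/i = T_3$. These factors cancel in the quotients $\theta = T_1/T_2$ and $\phi = (e^{\lambda_1 h}-e^{\lambda_2 h})/(\lambda_1-\lambda_2+\theta C_1)$, giving the real expressions for $\theta$ and $\phi$ in \eqref{eq:15}, after which $\psi = e^{\lambda h}-\phi\lambda(\theta e^{\lambda h}+1-\theta)$ is read off directly from the second equation of \eqref{eq:13}.

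A quicker, computation-free argument for reality, which I would also record, is that the defining system \eqref{eq:9} for $(\psi,\phi,\theta)$ is invariant under complex conjugation combined with the interchange $\lambda_1 \leftrightarrow \lambda_2$, because $\overline{e^{\lambda_1 h}} = e^{\lambda_2 h}$ and $\lambda_3$ is real; hence $(\bar\psi,\bar\phi,\bar\theta)$ is again a solution of \eqref{eq:9}, and uniqueness of the solution in the nondegenerate case forces $\psi,\phi,\theta \in \mathbb{R}$. The only genuine work is the bookkeeping in the second step: checking that, after the common factors are pulled out, the denominator reduces to the five-term expression for $T_2$ and the denominator of $\phi$ to $2\beta + T_3\theta$ exactly as displayed in \eqref{eq:15}. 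I expect this algebraic simplification of $T_2$ to be the main (purely mechanical) obstacle; once it is carried out, the corollary follows from Theorems~\ref{theorem3} and~\ref{theorem2} with nothing further to prove.
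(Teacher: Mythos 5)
Your proposal matches the paper's approach exactly: the corollary is obtained by substituting $\lambda_1=\alpha+\beta i$, $\lambda_2=\alpha-\beta i$, $\lambda_3=\lambda$ into the formulas of Theorem~\ref{theorem3} and simplifying with Euler's formula, the imaginary factors cancelling to yield the real expressions \eqref{eq:15}. Your additional conjugation-symmetry argument for reality is a harmless (and tidy) supplement, but the route is the same as the paper's.
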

\subsection{The case $A$ has eigenvalues $\lambda_1 = \lambda_2 \ne \lambda_3$}
In this case the matrix
 $A$ is similar to one of the following Jordan form matrices
\begin{equation*}
J_2 = 
\begin{pmatrix}
\lambda_1& 0& 0\\
0& \lambda_1& 0\\
0& 0& \lambda_2
\end{pmatrix},
\qquad
J_3 = 
\begin{pmatrix}
\lambda_1& 1& 0\\
0& \lambda_1& 0\\
0& 0& \lambda_2
\end{pmatrix}.
\end{equation*}
\textbf{(i). When $A$ is similar to $ J_2 $}\\

 In this subcase the linear system  \eqref{de} with the coefficient matrix $J_2$ has exact solution
\begin{equation*}
x(t) = c_1e^{\lambda_1 t}, \qquad y(t) = c_2e^{\lambda_1 t}, \qquad z(t) = c_3e^{\lambda_2 t},
\end{equation*}
or equivalently
\begin{equation}\label{eq:16}
x_{k + 1} = x_ke^{\lambda_1 h}, \qquad y_{k + 1} = y_k e^{\lambda_1 h}, \qquad z_{k + 1} = z_k e^{\lambda_2 h}.
\end{equation}
Applying the difference scheme \eqref{ds} for $\textbf{x}' = J_2\textbf{x}$ we obtain
\begin{equation}\label{eq:17}
x_{k + 1} = \dfrac{\psi + \phi \lambda_1 (1 - \theta)}{1 - \phi \lambda_1 \theta}x_k, \quad y_{k + 1} = \dfrac{\psi + \phi \lambda_1 (1 - \theta)}{1 - \phi \lambda_1 \theta}y_k,\quad z_{k + 1} = \dfrac{\psi + \phi \lambda_2 (1 - \theta)}{1 - \phi \lambda_2 \theta}z_k.
\end{equation}
Identifying \eqref{eq:17} and \eqref{eq:16} we come to the system of conditions for determining
 $\psi, \phi, \theta$
\begin{equation}\label{eq:18}
\dfrac{\psi + \phi \lambda_1 (1 - \theta)}{1 - \phi \lambda_1 \theta} = e^{\lambda_1 h}, \qquad\dfrac{\psi + \phi \lambda_2 (1 - \theta)}{1 - \phi \lambda_2 \theta} = e^{\lambda_2 h}.
\end{equation}
The above system \eqref{eq:18} is of two equations but contains three parameters. Therefore, it has infinite number of solutions. It is easy to find a dependence of $\psi, \phi$ on $ \theta $ as follows
\begin{equation}\label{eq:19}
\phi = \dfrac{e^{\lambda_1h} - e^{\lambda_2h}}{(\lambda_1 - \lambda_2)(1 - \theta) + \theta(\lambda_1e^{\lambda_1h} - \lambda_2e^{\lambda_2h})},\qquad \psi = e^{\lambda_1h}(1 - \phi \lambda_1 \theta) - \phi \lambda_1(1 - \theta).
\end{equation}
\begin{theorem}\label{theorem5}
The linear system \eqref{de} with the coefficient matrix
\begin{equation}\label{eq:20}
J_2 = \begin{pmatrix}
\lambda_1& 0 & 0\\
0& \lambda_1& 0\\
0& 0& \lambda_2
\end{pmatrix},
\end{equation}
has an exact difference scheme of the form \eqref{ds}, where the parameters $\psi, \phi, \theta$ satisfy the relations \eqref{eq:19}.
\end{theorem}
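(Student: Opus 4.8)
The plan is to exploit the fact that $J_2$ is diagonal, so that the scheme \eqref{ds} with $A$ replaced by $J_2$ decouples into three independent scalar recurrences; moreover, since $\lambda_1$ is a double eigenvalue the first two of these coincide. First I would record the exact solution of $\textbf{x}' = J_2\textbf{x}$ in grid form, namely \eqref{eq:16} with $x_{k+1} = x_k e^{\lambda_1 h}$, $y_{k+1} = y_k e^{\lambda_1 h}$, $z_{k+1} = z_k e^{\lambda_2 h}$. Substituting $J_2$ into \eqref{ds} and solving each scalar equation for $\textbf{x}_{k+1}$ gives \eqref{eq:17}, whose multipliers of $x_k$ and $y_k$ are identical.

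Exactness then amounts to matching the multipliers in \eqref{eq:17} with the exponential factors in \eqref{eq:16}, which yields precisely the two conditions in \eqref{eq:18} --- two rather than three, because the $x$- and $y$-equations impose the same requirement. This is an underdetermined system: two equations in the three unknowns $\psi,\phi,\theta$, so it has a one-parameter family of solutions. I would solve it keeping $\theta$ free: rewriting each condition in the equivalent form $\psi = e^{\lambda_i h}\bigl(1 - \phi\lambda_i\theta\bigr) - \phi\lambda_i(1-\theta)$, $i=1,2$, and subtracting to eliminate $\psi$ leaves one linear equation for $\phi$, whose solution is the first formula in \eqref{eq:19}; back-substitution into the $i=1$ relation gives the stated $\psi$. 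Conversely, any triple satisfying \eqref{eq:19} satisfies \eqref{eq:18}, hence makes \eqref{ds} reproduce \eqref{eq:16}, i.e. it is exact.

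The one point needing care --- and essentially the only content beyond the algebra above --- is to confirm that \eqref{eq:19} defines an admissible NSFD scheme. Expanding the denominator $(\lambda_1-\lambda_2)(1-\theta) + \theta(\lambda_1 e^{\lambda_1 h} - \lambda_2 e^{\lambda_2 h})$ at $h=0$ gives $\lambda_1-\lambda_2\ne 0$, so for $h$ small and any fixed $\theta$ the parameter $\phi$ is well defined, the quantities $1-\phi\lambda_i\theta$ appearing in \eqref{eq:17} are nonzero, and, using $e^{\lambda_i h}-e^{\lambda_j h} = (\lambda_i-\lambda_j)h + \mathcal{O}(h^2)$, one checks $\phi(h) = h + \mathcal{O}(h^2)$ and $\psi(h) = 1 + \mathcal{O}(h^2)$. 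I expect this verification, not the solving of \eqref{eq:18}, to be the main (though still mild) obstacle; note also that, in contrast with the three-distinct-eigenvalues case, the repeated eigenvalue here leaves an extra degree of freedom, which is exactly the free parameter $\theta$.
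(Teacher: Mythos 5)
Your proposal is correct and follows essentially the same route as the paper: write the exact solution in grid form \eqref{eq:16}, apply \eqref{ds} to the diagonal matrix $J_2$ to get \eqref{eq:17}, observe that matching multipliers gives the underdetermined two-equation system \eqref{eq:18}, and solve for $\phi$ and $\psi$ in terms of the free parameter $\theta$ as in \eqref{eq:19}. Your added verification that $\phi(h)=h+\mathcal{O}(h^2)$ and $\psi(h)=1+\mathcal{O}(h^2)$ is a sensible supplement that the paper defers to the lemma at the end of Section 3.
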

\textbf{(ii). When $A$ is similar to $J_3$}\\
In this case the system \eqref{de} with the coefficient matrix $J_3$ has an exact solution
\begin{equation*}
x(t) = (c_2t + c_1)e^{\lambda_1 t}, \qquad y(t) = c_2e^{\lambda_1 t}, \qquad z(t) = c_3e^{\lambda_2 t},
\end{equation*}
or equivalently
\begin{equation}\label{eq:20}
x_{k + 1} = (hy_k + x_k) e^{\lambda_1 h}, \qquad y_{k + 1} = y_k e^{\lambda_1 h}, \qquad z_{k + 1} = z_k e^{\lambda_2 h}.
\end{equation}
Applying the difference scheme \eqref{ds} to the system $\textbf{x}' = J_3\textbf{x}$ we obtain
\begin{equation}\label{eq:21}
\begin{pmatrix}
1 - \phi \lambda_1 \theta& -\phi \theta& 0\\
\\
0& 1 - \phi \lambda_1 \theta&0\\
\\
0& 0& 1 - \phi \lambda_2 \theta
\end{pmatrix}
\begin{pmatrix}
x_{k + 1}\\
\\
y_{k + 1}\\
\\
z_{k + 1}
\end{pmatrix}
=
\begin{pmatrix}
[\psi + \phi \lambda_1 (1 - \theta)]x_k + \phi(1 - \theta)y_k\\
\\
[\psi + \phi \lambda_1 (1 - \theta)]y_k\\
\\
[\psi + \phi \lambda_2 (1 - \theta)]z_k
\end{pmatrix}.
\end{equation}
Suppose the above system is not degenerate. It occurs if $(1 - \phi \lambda_1 \theta)(1 - \phi \lambda_2 \theta) \ne 0.$ Then the system has a unique solution

\begin{equation}\label{eq:22}
\begin{split}
&x_{k + 1} = \dfrac{\psi + \phi \lambda_1 (1 - \theta)}{1 - \phi \lambda_1 \theta}x_k + \phi \dfrac{1 - \theta + \psi \theta}{( 1- \phi \lambda_1 \theta)^2}y_k \\
&y_{k + 1} = \dfrac{\psi + \phi \lambda_1 (1 - \theta)}{1 - \phi \lambda_1 \theta}y_k,\\
&z_{k + 1} = \dfrac{\psi + \phi \lambda_2 (1 - \theta)}{1 - \phi \lambda_2 \theta}z_k.
\end{split}
\end{equation}
Identifying \eqref{eq:22} and \eqref{eq:20} we come to the system of conditions for determining the parameters $\psi$, $\phi$, $\theta$:
\begin{equation}\label{eq:23}
\dfrac{\psi + \phi \lambda_2 (1 - \theta)}{1 - \phi \lambda_2 \theta} = e^{\lambda_2 h},\quad
\dfrac{\psi + \phi \lambda_1 (1 - \theta)}{1 - \phi \lambda_1 \theta} = e^{\lambda_1 h},\quad
\phi \dfrac{1 - \theta + \psi \theta}{(1 - \phi \lambda_1 \theta)^2} = he^{\lambda_1 h}.
\end{equation}
If $\lambda_1 = 0$ then  from the above relations we get
\begin{equation}\label{eq:p1}
\psi = 1, \qquad \phi = h, \qquad \theta = \dfrac{e^{\lambda_2 h} - \lambda_2 h - 1}{\lambda_2 h (e^{\lambda_2 h} - 1)}.
\end{equation}
Otherwise, setting $\phi \theta = T$ from the third equation of  \eqref{eq:23} we obtain a quadratic equation for $T$
\begin{equation}\label{eq:24}
\phi + (\psi - 1)T = (1 - \lambda_1T)^2he^{\lambda_1 h}.
\end{equation}
Subtracting $1$ from the first and second equations of  \eqref{eq:23} we obtain a system of two equations for $\psi - 1$ and $\phi$, where $T$ is considered as a parameter.
\begin{equation*}
\begin{pmatrix}
1& \lambda_2\\
\\
1& \lambda_1
\end{pmatrix}
\begin{pmatrix}
\psi - 1\\
\\
\phi
\end{pmatrix}
=
\begin{pmatrix}
(e^{\lambda_2h} - 1 )(1 - \lambda_2T)\\
\\
(e^{\lambda_1h} - 1 )(1 - \lambda_1T)
\end{pmatrix}.
\end{equation*}
Due to $\lambda_1 \ne \lambda_2$ the system has a solution
\begin{equation}\label{eq:25}
\begin{split}
&\psi - 1 = \dfrac{\lambda_1(e^{\lambda_2h} - 1) - \lambda_2(e^{\lambda_1h} - 1) + \lambda_1\lambda_2(e^{\lambda_1h} - e^{\lambda_2h})T}{\lambda_1 - \lambda_2},\\
&\phi = \dfrac{e^{\lambda_1 h} - e^{\lambda_2 h} + \big[\lambda_2(e^{\lambda_2h} - 1) - \lambda_1(e^{\lambda_1 h} - 1)\big]T}{\lambda_1 - \lambda_2}.
\end{split}
\end{equation}

Substituting \eqref{eq:25} into \eqref{eq:24} we obtain a quadratic equation for $T$
\begin{equation}\label{eq:26}
\begin{split}
\big[he^{\lambda_1h}\lambda_1^2 - \dfrac{\lambda_1 \lambda_2(e^{\lambda_1h} - e^{\lambda_2h})}{\lambda_1 - \lambda_2}\big]T^2
 &- \big[2he^{\lambda_1h}\lambda_1 + \dfrac{(\lambda_1 + \lambda_2)(e^{\lambda_2h} - e^{\lambda_1h})}{\lambda_1 - \lambda_2}\big]T\\
&+ \big[he^{\lambda_1h} - \dfrac{e^{\lambda_1h} - e^{\lambda_2h}}{\lambda_1 - \lambda_2}\big] = 0.
\end{split}
\end{equation}
The equation \eqref{eq:26} has the discriminant $\Delta = (e^{\lambda_1h} - e^{\lambda_2h})^2 > 0$, hence it has two distinct roots $T_1, T_2$ given by
\begin{equation}\label{eq:27}
T_1 = \dfrac{\big[2he^{\lambda_1h}\lambda_1 + \dfrac{(\lambda_1 + \lambda_2)(e^{\lambda_2h} - e^{\lambda_1h})}{\lambda_1 - \lambda_2}\big] - (e^{\lambda_1h} - e^{\lambda_2h})}{2[he^{\lambda_1h}\lambda_1^2 - \dfrac{\lambda_1 \lambda_2(e^{\lambda_1h} - e^{\lambda_2h})}{\lambda_1 - \lambda_2}\big]}.
\end{equation}
\begin{equation}\label{eq:28}
T_2 = \dfrac{\big[2he^{\lambda_1h}\lambda_1 + \dfrac{(\lambda_1 + \lambda_2)(e^{\lambda_2h} - e^{\lambda_1h})}{\lambda_1 - \lambda_2}\big] + (e^{\lambda_1h} - e^{\lambda_2h})}{2[he^{\lambda_1h}\lambda_1^2 - \dfrac{\lambda_1 \lambda_2(e^{\lambda_1h} - e^{\lambda_2h})}{\lambda_1 - \lambda_2}\big]}.
\end{equation}
After finding $T = \phi \theta$,  we can calculate $\psi$ and $\phi$ by the formulas \eqref{eq:25}, and  $\theta = T/\phi$. \par
The following proposition implies that only the value  $T = T_1$ is consistent with the assumption of non-degeneration of the system \eqref{eq:21}
\begin{proposition}\label{proposition1}
For $T_1$ and $T_2$ determined by \eqref{eq:26} and \eqref{eq:27} we have
\begin{equation*}
\lim_{h \to 0^+}\lambda_1 T_1 = 0, \qquad \lim_{h \to 0^+}\lambda_1 T_2 = 1.
\end{equation*}
\end{proposition}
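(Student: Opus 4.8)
The plan is to evaluate both limits directly by Taylor-expanding the exponentials about $h=0$ and keeping track of leading orders; throughout, recall that we are in the case $\lambda_1\neq 0$ and $\lambda_1\neq\lambda_2$, for which \eqref{eq:27}--\eqref{eq:28} are the relevant formulas. First I would introduce shorthand for the common denominator and for the bracket occurring in both numerators,
\begin{equation*}
D(h)=2\Bigl[he^{\lambda_1h}\lambda_1^2-\frac{\lambda_1\lambda_2(e^{\lambda_1h}-e^{\lambda_2h})}{\lambda_1-\lambda_2}\Bigr],\qquad B(h)=2he^{\lambda_1h}\lambda_1+\frac{(\lambda_1+\lambda_2)(e^{\lambda_2h}-e^{\lambda_1h})}{\lambda_1-\lambda_2},
\end{equation*}
so that, by \eqref{eq:27}--\eqref{eq:28}, $\lambda_1T_1=\lambda_1\bigl(B(h)-(e^{\lambda_1h}-e^{\lambda_2h})\bigr)/D(h)$ and $\lambda_1T_2=\lambda_1\bigl(B(h)+(e^{\lambda_1h}-e^{\lambda_2h})\bigr)/D(h)$.

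Next I would record the elementary expansions $e^{\lambda h}=1+\lambda h+\tfrac12\lambda^2h^2+\mathcal{O}(h^3)$ and $\dfrac{e^{\lambda_1h}-e^{\lambda_2h}}{\lambda_1-\lambda_2}=h+\tfrac12(\lambda_1+\lambda_2)h^2+\mathcal{O}(h^3)$, and use them to get the denominator estimate $D(h)=2\lambda_1(\lambda_1-\lambda_2)h+\mathcal{O}(h^2)$. Since $\lambda_1(\lambda_1-\lambda_2)\neq 0$, this shows $D(h)\neq 0$ for all sufficiently small $h>0$ (and, as a by-product, re-establishes the non-degeneracy assumed for \eqref{eq:21}). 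The same expansions give $B(h)=(\lambda_1-\lambda_2)h+\mathcal{O}(h^2)$ and $e^{\lambda_1h}-e^{\lambda_2h}=(\lambda_1-\lambda_2)h+\mathcal{O}(h^2)$.

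For $T_2$ the leading $\mathcal{O}(h)$ terms add, so $B(h)+(e^{\lambda_1h}-e^{\lambda_2h})=2(\lambda_1-\lambda_2)h+\mathcal{O}(h^2)$ and hence $\lambda_1T_2=\dfrac{2\lambda_1(\lambda_1-\lambda_2)h+\mathcal{O}(h^2)}{2\lambda_1(\lambda_1-\lambda_2)h+\mathcal{O}(h^2)}\to 1$ as $h\to 0^+$. The one delicate point — and the main obstacle — is $T_1$: in $B(h)-(e^{\lambda_1h}-e^{\lambda_2h})$ the $\mathcal{O}(h)$ terms cancel exactly, so the expansions of $B(h)$ and $e^{\lambda_1h}-e^{\lambda_2h}$ must be pushed to order $h^2$ and the $h^2$-coefficients compared. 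Carrying this out, I expect $B(h)-(e^{\lambda_1h}-e^{\lambda_2h})=\lambda_1(\lambda_1-\lambda_2)h^2+\mathcal{O}(h^3)$, whence $\lambda_1T_1=\dfrac{\lambda_1^2(\lambda_1-\lambda_2)h^2+\mathcal{O}(h^3)}{2\lambda_1(\lambda_1-\lambda_2)h+\mathcal{O}(h^2)}=\tfrac12\lambda_1h+\mathcal{O}(h^2)\to 0$.

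Beyond that second-order bookkeeping for the cancelling numerator, nothing conceptual is needed. It is worth noting at the end that this very cancellation is why $1-\phi\lambda_1\theta=1-\lambda_1T_1$ stays away from $0$ as $h\to 0^+$ only for the root $T=T_1$, which is precisely the reason the later construction must take $T=T_1$.
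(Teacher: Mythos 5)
Your proof is correct, and it is essentially the paper's argument: the paper simply remarks that both limits are $0/0$ indeterminate forms resolved by L'H\^opital's rule, which is the same local-expansion-at-$h=0$ computation you carry out via Taylor series. Your version actually supplies the details the paper omits (in particular the second-order bookkeeping showing $B(h)-(e^{\lambda_1 h}-e^{\lambda_2 h})=\lambda_1(\lambda_1-\lambda_2)h^2+\mathcal{O}(h^3)$, and the useful observation that $D(h)\ne 0$ for small $h>0$), and the expansions check out.
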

\begin{proof}
The proposition is easily proved by the use of L'Hospital's rule for the indeterminate form of the type  $0/0$. 
\end{proof}

Thus, in the case of the Jordan form matrix $J_3$ the parameters of the difference scheme are determined as follows
\begin{equation}\label{eq:29}
\begin{split}
&\psi  = 1 + \dfrac{\lambda_1(e^{\lambda_2h} - 1) - \lambda_2(e^{\lambda_1h} - 1) + \lambda_1\lambda_2(e^{\lambda_1h} - e^{\lambda_2h})T_1}{\lambda_1 - \lambda_2},\\
&\phi = \dfrac{e^{\lambda_1 h} - e^{\lambda_2 h} + \big[\lambda_2(e^{\lambda_2h} - 1) - \lambda_1(e^{\lambda_1 h} - 1)\big]T_1}{\lambda_1 - \lambda_2}.\\
&\theta = \dfrac{T_1}{\phi}.
\end{split}
\end{equation}
\begin{theorem}\label{theorem6}
The linear system \eqref{de} with the coefficient matrix 
\begin{equation}\label{eq:30}
J_3 = 
\begin{pmatrix}
\lambda_1& 1 & 0\\
0&\lambda_1 &0\\
0& 0& \lambda_2
\end{pmatrix}
\end{equation}
has an exact difference scheme of the form \eqref{ds}, where the parameters $\psi, \phi, \theta$ are given by \eqref{eq:29}, where $T_1$ is determined by \eqref{eq:27} if 
$\lambda_1 \neq 0$ and by \eqref{eq:p1} if $\lambda_1 = 0$.
\end{theorem}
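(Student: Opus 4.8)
The plan is to show that, for the Jordan matrix $J_3$ in \eqref{eq:30}, the one-step map $\textbf{x}_k\mapsto\textbf{x}_{k+1}$ defined by \eqref{ds} coincides with the exact one-step map \eqref{eq:20} precisely when $\psi,\phi,\theta$ are chosen as in \eqref{eq:29} (or \eqref{eq:p1} when $\lambda_1=0$). Both maps are linear in $\textbf{x}_k$, so exactness for every initial vector is equivalent to equality of the two associated $3\times 3$ matrices, i.e.\ to a finite list of scalar identities. First I would rewrite the implicit step \eqref{ds} for $J_3$ as the linear system \eqref{eq:21}; under the non-degeneracy assumption $(1-\phi\lambda_1\theta)(1-\phi\lambda_2\theta)\neq 0$ I solve it to obtain \eqref{eq:22}. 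Comparing \eqref{eq:22} with \eqref{eq:20} coefficient by coefficient — and observing that the condition coming from the coefficient of $x_k$ in $x_{k+1}$ and that from the coefficient of $y_k$ in $y_{k+1}$ coincide — yields exactly the three equations \eqref{eq:23} for $\psi,\phi,\theta$. It then remains to solve \eqref{eq:23} and to check that the resulting parameters keep the step non-degenerate.

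For $\lambda_1=0$ the second equation of \eqref{eq:23} forces $\psi=1$, the third then gives $\phi=h$, and the first collapses to a single linear equation in $\theta$ whose solution is \eqref{eq:p1}; here $1-\phi\lambda_1\theta=1$ and $1-\phi\lambda_2\theta=1-h\lambda_2\theta\to 1$ as $h\to0^+$, so \eqref{eq:21} is non-degenerate for small $h$. For $\lambda_1\neq0$ I would introduce the auxiliary unknown $T:=\phi\theta$. The third equation of \eqref{eq:23} becomes \eqref{eq:24}, and subtracting $1$ from the first two equations gives a $2\times2$ linear system for $(\psi-1,\phi)$ with $T$ a parameter; since $\lambda_1\neq\lambda_2$ this system has the unique solution \eqref{eq:25}. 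Substituting \eqref{eq:25} into \eqref{eq:24} produces the quadratic \eqref{eq:26} in $T$.

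The technical core is showing that the discriminant of \eqref{eq:26} simplifies to the perfect square $\Delta=(e^{\lambda_1h}-e^{\lambda_2h})^2>0$; this is an elementary but bookkeeping-heavy manipulation of exponentials, and it is the step I expect to be the main obstacle. Granting it, \eqref{eq:26} has the two real roots $T_1,T_2$ of \eqref{eq:27}--\eqref{eq:28}. To single out the admissible one I would use Proposition~\ref{proposition1}: the two non-degeneracy factors in \eqref{eq:21} equal $1-\phi\lambda_i\theta=1-\lambda_i T$ for $i=1,2$, and since $\lambda_1T_2\to1$ as $h\to0^+$, the choice $T=T_2$ makes $1-\phi\lambda_1\theta$ vanish in the limit and is therefore inadmissible for small $h$; whereas $\lambda_1T_1\to0$ gives $T_1\to0$ (as $\lambda_1\neq0$), hence $1-\lambda_1T_1\to1$ and $1-\lambda_2T_1\to1$, so $T=T_1$ is admissible and the step is non-degenerate. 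One then reads $\psi,\phi,\theta$ off \eqref{eq:29} with $\theta=T_1/\phi$. Reversing the chain of equivalences, with these parameters \eqref{eq:23} holds, so \eqref{eq:22} equals \eqref{eq:20}, so the exact grid values \eqref{eq:20} satisfy \eqref{ds}; that is, \eqref{ds} is an exact scheme for $\textbf{x}'=J_3\textbf{x}$, and hence, by the similarity reduction established earlier, also for every $A$ similar to $J_3$.
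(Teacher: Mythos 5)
Your proposal is correct and follows essentially the same route as the paper: reduce exactness to the coefficient identities \eqref{eq:23} via \eqref{eq:21}--\eqref{eq:22}, dispose of the case $\lambda_1=0$ directly, and for $\lambda_1\neq 0$ introduce $T=\phi\theta$, solve the linear subsystem \eqref{eq:25}, reduce to the quadratic \eqref{eq:26} with discriminant $(e^{\lambda_1h}-e^{\lambda_2h})^2$, and use Proposition~\ref{proposition1} to select the root $T_1$ compatible with non-degeneracy. Your added remarks on verifying non-degeneracy of \eqref{eq:21} for small $h$ only make explicit what the paper leaves implicit.
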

\begin{remark}
The system of conditions \eqref{eq:23} is obtained from \eqref{eq:18} by adding the third equation. Therefore, the parameters satisfying \eqref{eq:23} also satisfies \eqref{eq:18}, i.e., the difference scheme with these parameters is exact for the linear system \eqref{de} having the matrix $A$ similar to $J_2$. Thus, in the case if $A$ has the set of eigenvalues  $\sigma(A) = \big\{\lambda_1, \lambda_1, \lambda_2 \big\}$ then the exact difference scheme may be determined by Theorem \ref{theorem6} not depending on the similar Jordan form matrices.
\end{remark}
\subsection{The case $A$ has eigenvalues $\lambda_1 = \lambda_2 = \lambda_3 = \lambda$}
In this case $A$ is similar to one of the following three Jordan form matrices
\begin{equation*}
J_4 = 
\begin{pmatrix}
\lambda& 0& 0\\
0& \lambda& 0\\
0& 0& \lambda
\end{pmatrix}, \qquad
J_5 = 
\begin{pmatrix}
\lambda& 1& 0\\
0& \lambda& 0\\
0& 0& \lambda
\end{pmatrix}, \qquad
J_6 = 
\begin{pmatrix}
\lambda& 1& 0\\
0& \lambda& 1\\
0& 0& \lambda
\end{pmatrix}, \qquad
\end{equation*}
\textbf{(i). When $A$ is similar to $J_4$}\\
In this subcase the linear system with the coefficient matrix $J_4$ has the exact solution

\begin{equation*}
x(t) = c_1e^{\lambda t}, \qquad y(t) = c_2e^{\lambda t}, \qquad z(t) = c_3e^{\lambda t},
\end{equation*}
or equivalently
\begin{equation}\label{eq:31}
x_{k + 1} =x_ke^{\lambda h}, \qquad y_{k + 1} =y_ke^{\lambda h}, \qquad z_{k + 1} =z_ke^{\lambda h}.
\end{equation}
Applying the difference scheme \eqref{ds} to the system $\textbf{x}' = J_4\textbf{x}$ we obtain
\begin{equation}\label{eq:32}
x_{k + 1} = \dfrac{\psi + \phi \lambda (1 - \theta)}{1 - \phi \lambda \theta}x_k, \qquad y_{k + 1} = \dfrac{\psi + \phi \lambda (1 - \theta)}{1 - \phi \lambda \theta}y_k, \qquad z_{k + 1} = \dfrac{\psi + \phi \lambda (1 - \theta)}{1 - \phi \lambda \theta}z_k.
\end{equation}
Identifying \eqref{eq:32} and \eqref{eq:31} we obtain
\begin{equation}\label{eq:33}
\dfrac{\psi + \phi \lambda (1 - \theta)}{1 - \phi \lambda \theta} = e^{\lambda h}.
\end{equation}
\begin{remark}
The equation \eqref{eq:33}  contains three unknowns. So, it has infinitely number of solutions, namely, the set of its solutions is a two-dimensional linear space.  
 A simple solution of it is $\theta = 0, \psi = 1$ and $\phi = \dfrac{e^{\lambda h} - 1}{\lambda}$ ($\lambda \ne 0$).
\end{remark}
\begin{theorem}\label{theorem7}
The linear system \eqref{de} with the coefficient matrix
\begin{equation*}
J_4 = \begin{pmatrix}
\lambda& 0 & 0\\
0& \lambda& 0\\
0& 0& \lambda
\end{pmatrix}
\end{equation*}
has an exact difference scheme of the form\eqref{ds}, where the parameters $\psi, \phi, \theta$ satisfy \eqref{eq:33}.
\end{theorem}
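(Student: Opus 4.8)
The plan is to proceed exactly as in the previous subsections: reduce the vector scheme to scalar recurrences, match them with the exact flow, and then invoke Theorem~\ref{theorem2}. First I would observe that $J_4 = \lambda I$ is a scalar matrix, so the right-hand side of \eqref{ds} applied to $\textbf{u}' = J_4\textbf{u}$ acts diagonally; consequently the scheme splits into three identical scalar equations
\[
\frac{x_{k+1} - \psi x_k}{\phi} = \lambda\big[\theta x_{k+1} + (1-\theta)x_k\big],
\]
together with the analogous ones for $y$ and $z$. Assuming the non-degeneracy condition $1 - \phi\lambda\theta \neq 0$, solving for $x_{k+1}$ yields the one-step map \eqref{eq:32}. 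Since the exact solution of $\textbf{u}' = J_4\textbf{u}$ satisfies \eqref{eq:31}, i.e.\ $x_{k+1} = e^{\lambda h}x_k$ (and the same for $y_k$, $z_k$), identifying the two recurrences gives precisely the single scalar condition \eqref{eq:33}, and conversely any parameters satisfying \eqref{eq:33} make all three scalar maps coincide with the exact flow.

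Next I would check that \eqref{eq:33} is solvable within the admissible class, i.e.\ with $\phi = h + \mathcal{O}(h^2)$, $\psi = 1 + \mathcal{O}(h^2)$ and $1 - \phi\lambda\theta \neq 0$. When $\lambda \neq 0$ the triple $\theta = 0$, $\psi = 1$, $\phi = (e^{\lambda h}-1)/\lambda$ satisfies all these requirements; when $\lambda = 0$ one may take $\psi = 1$, $\phi = h$ and $\theta$ arbitrary. More generally \eqref{eq:33} defines a two-parameter family of admissible solutions, so the claimed scheme genuinely exists rather than being vacuous. For any such triple the scalar recurrence reproduces $e^{\lambda h}x_k$ exactly, hence \eqref{dfu} (with $J = J_4$) is exact for $\textbf{u}' = J_4\textbf{u}$.

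Finally, exactness transfers from $\textbf{u}' = J_4\textbf{u}$ to $\textbf{x}' = A\textbf{x}$ by Theorem~\ref{theorem2}, since $A$ is similar to $J_4$. (In fact a scalar matrix is similar only to itself, so here $A = J_4 = \lambda I$, but the appeal to Theorem~\ref{theorem2} keeps the argument uniform with the other cases.) There is essentially no hard step in this case: the only point requiring a moment's care is to carry the non-degeneracy hypothesis $1 - \phi\lambda\theta \neq 0$ through the derivation of \eqref{eq:32}, and to exhibit at least one admissible solution of \eqref{eq:33} so that the exact scheme is well defined.
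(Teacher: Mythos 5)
Your proposal is correct and follows essentially the same route as the paper: apply \eqref{ds} to the diagonal system to get the scalar recurrences \eqref{eq:32}, identify with the exact flow \eqref{eq:31} to obtain the single condition \eqref{eq:33}, and exhibit the solution $\theta = 0$, $\psi = 1$, $\phi = (e^{\lambda h}-1)/\lambda$ before transferring exactness via Theorem~\ref{theorem2}. The extra care you take with the non-degeneracy condition $1 - \phi\lambda\theta \neq 0$ and the $\lambda = 0$ case is a harmless (and welcome) refinement of what the paper leaves implicit.
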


\begin{flushleft}
\textbf{(ii). When $A$ is similar to $J_5$}\\
\end{flushleft}
In this subcase the linear system with the coefficient matrix $J_5$ has the exact solution
\begin{equation*}
x(t) = (c_1 + c_2t)e^{\lambda t}, \qquad y(t) = c_2e^{\lambda t}, \qquad z(t) = c_3e^{\lambda t},
\end{equation*}
or equivalently
\begin{equation}\label{eq:34}
x_{k + 1} =(x_k + y_kh)e^{\lambda h}, \qquad y_{k + 1} =y_ke^{\lambda h}, \qquad z_{k + 1} =z_ke^{\lambda h}.
\end{equation}
Applying the difference scheme \eqref{ds} to the system $\textbf{x}' = J_5\textbf{x}$ we obtain
\begin{equation}\label{eq:35}
\begin{split}
&x_{k + 1} = \dfrac{\psi + \phi \lambda (1 - \theta)}{1 - \phi \lambda \theta}x_k + \phi \dfrac{1 - \theta + \psi \theta}{(1 - \phi \lambda \theta)^2}y_k,\\
& y_{k + 1} = \dfrac{\psi + \phi \lambda (1 - \theta)}{1 - \phi \lambda \theta}y_k, \qquad z_{k + 1} = \dfrac{\psi + \phi \lambda (1 - \theta)}{1 - \phi \lambda \theta}z_k.
\end{split}
\end{equation}
Identifying \eqref{eq:35} and \eqref{eq:34}  we obtain the system of conditions for $\psi , \phi , \theta$
\begin{equation}\label{eq:36}
\dfrac{\psi + \phi \lambda (1 - \theta)}{1 - \phi \lambda \theta} = e^{\lambda h}, \qquad \phi \dfrac{1 - \theta + \psi \theta}{(1 - \phi \lambda \theta)^2} = he^{\lambda h}.
\end{equation}
\begin{remark}
The system \eqref{eq:36} is of two equations with three unknowns, so, it has infinitely many solutions, namely, its set of solutions is an one-dimensional linear space. A simple solution of it is $\theta = 0$,  $\phi(h) = h e^{\lambda h}$, $\psi = (1 - \lambda h)e^{\lambda h}.$ 
\end{remark}
\begin{theorem}\label{theorem8}
The linear system \eqref{de} with the coefficient matrix
\begin{equation}\label{eq:37}
J_5 = \begin{pmatrix}
\lambda& 1 & 0\\
0& \lambda& 0\\
0& 0& \lambda
\end{pmatrix},
\end{equation}
has an exact difference scheme of the form\eqref{ds}, where the parameters $\psi, \phi, \theta$ satisfy \eqref{eq:36}.
\end{theorem}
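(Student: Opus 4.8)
The reduction I would rely on is already in place in the text: substituting the ansatz \eqref{ds} into $\textbf{x}'=J_5\textbf{x}$ gave the update \eqref{eq:35} (valid whenever the non-degeneracy condition $1-\phi\lambda\theta\neq 0$ holds), and requiring the iterates of \eqref{eq:35} to agree with the exact grid values \eqref{eq:34} is exactly the pair of scalar conditions \eqref{eq:36}. So the theorem reduces to the purely algebraic claim that \eqref{eq:36} admits at least one triple $(\psi,\phi,\theta)$ for which $1-\phi\lambda\theta\neq 0$ and the admissibility conditions $\phi(h)=h+\mathcal{O}(h^2)$, $\psi(h)=1+\mathcal{O}(h^2)$ of an NSFD scheme are met; since \eqref{eq:36} is a system of two equations in three unknowns, existence should be immediate once a convenient normalization is picked.

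The plan is to normalize by taking $\theta=0$. Then $1-\phi\lambda\theta=1\neq0$ automatically, the first equation of \eqref{eq:36} collapses to $\psi+\phi\lambda=e^{\lambda h}$ and the second to $\phi=he^{\lambda h}$, which together force $\psi=(1-\lambda h)e^{\lambda h}$. I would then verify the order conditions directly from $e^{\lambda h}=1+\lambda h+\tfrac12\lambda^2h^2+\mathcal{O}(h^3)$: this gives $\phi=h+\lambda h^2+\mathcal{O}(h^3)=h+\mathcal{O}(h^2)$ and $\psi=1-\tfrac12\lambda^2h^2+\mathcal{O}(h^3)=1+\mathcal{O}(h^2)$, as required (and when $\lambda=0$ this degenerates to $\psi=1,\ \phi=h,\ \theta=0$, exact for the purely nilpotent block). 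Feeding $(\psi,\phi,\theta)=\big((1-\lambda h)e^{\lambda h},\,he^{\lambda h},\,0\big)$ back into \eqref{eq:35} turns it into $x_{k+1}=e^{\lambda h}x_k+he^{\lambda h}y_k$, $y_{k+1}=e^{\lambda h}y_k$, $z_{k+1}=e^{\lambda h}z_k$, which is precisely \eqref{eq:34}; hence \eqref{ds} with these parameters is exact for $\textbf{x}'=J_5\textbf{x}$, and by Theorem \ref{theorem2} it is exact for every $A$ similar to $J_5$. For completeness one may describe the full solution set of \eqref{eq:36} by keeping $\theta$ free: solve the first (affine) equation for $\psi$ in terms of $\phi$ and $\theta$, substitute into the second, and read off the resulting single relation between $\phi$ and $\theta$, whose graph is the one-dimensional family mentioned in the remark following the statement.

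I do not anticipate a genuine obstacle here: the underdetermined system \eqref{eq:36} is solved explicitly by inspection, and the only points needing a line of checking are (i) that the chosen triple keeps the passage from the matrix form to \eqref{eq:35} legitimate, i.e. $1-\phi\lambda\theta\neq0$, and (ii) that it satisfies $\phi=h+\mathcal{O}(h^2)$ and $\psi=1+\mathcal{O}(h^2)$ — both of which are transparent once $\theta=0$ is used. The ``work'', such as it is, is the elementary Taylor expansion of $e^{\lambda h}$ and the substitution verifying that \eqref{eq:35} collapses to \eqref{eq:34}.
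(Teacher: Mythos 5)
Your proposal is correct and follows essentially the same route as the paper: the derivation of \eqref{eq:35} and the identification with \eqref{eq:34} yielding \eqref{eq:36} is exactly the paper's argument, and your normalization $\theta=0$, $\phi=he^{\lambda h}$, $\psi=(1-\lambda h)e^{\lambda h}$ is precisely the ``simple solution'' the paper records in the remark preceding the theorem. Your additional checks of $1-\phi\lambda\theta\neq 0$ and the order conditions $\psi=1+\mathcal{O}(h^2)$, $\phi=h+\mathcal{O}(h^2)$ are a welcome but minor elaboration of what the paper leaves implicit.
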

\textbf{(iii). When $A$ is similar to $J_6$}\\
In this subcase \eqref{ds} with the coefficient matrix $J_6$ has the exact solution
\begin{equation*}
x(t) = (c_1 + c_2t + c_3\dfrac{t^2}{2})e^{\lambda t}, \qquad y(t) = (c_2 + c_3t)e^{\lambda t}, \qquad z(t) = c_3e^{\lambda t},
\end{equation*}
or equivalently
\begin{equation}\label{eq:38}
x_{k + 1} = (x_k + hy_k + \dfrac{h^2}{2}z_k) e^{\lambda h}, \qquad y_{k + 1} = (y_k + hz_k) e^{\lambda h}, \qquad z_{k + 1} = z_k e^{\lambda h}.
\end{equation}
Applying the difference scheme \eqref{ds} to the system $\textbf{x}' = J_6\textbf{x}$ we obtain
\begin{equation*}
\begin{pmatrix}
1 - \phi \lambda \theta& -\phi \theta& 0\\
\\
0& 1 - \phi \lambda \theta& -\phi \theta\\
\\
0& 0& 1 - \phi \lambda \theta
\end{pmatrix}
\begin{pmatrix}
x_{k + 1}\\
\\
y_{k + 1}\\
\\
z_{k + 1}
\end{pmatrix}
=
\begin{pmatrix}
[\psi + \phi \lambda (1 - \theta)]x_k + \phi(1 - \theta)y_k\\
\\
[\psi + \phi \lambda (1 -  \theta)]y_k + \phi(1 - \theta)z_k\\
\\
[\psi + \phi \lambda (1 -  \theta)]z_k
\end{pmatrix},
\end{equation*}
Under the condition $\phi \lambda \theta \ne 1$ we have
\begin{equation}\label{eq:39}
\begin{split}
&z_{k + 1} = \dfrac{\psi + \phi \lambda (1 - \theta)}{1 - \phi \lambda \theta}z_k,\\
&y_{k + 1} = \dfrac{\psi + \phi \lambda (1 - \theta)}{1 - \phi \lambda \theta}y_k + \phi \dfrac{1 - \theta + \psi \theta}{( 1- \phi \lambda \theta)^2}z_k\\
&x_{k + 1} = \dfrac{\psi + \phi \lambda (1 - \theta)}{1 - \phi \lambda \theta}x_k + \phi \dfrac{1 - \theta + \psi \theta}{( 1- \phi \lambda \theta)^2}y_k + \phi \dfrac{1 - \theta + \psi \theta}{( 1- \phi \lambda \theta)^3} \phi \theta z_k.
\end{split}
\end{equation}

Identifying \eqref{eq:39} and \eqref{eq:38} we obtain the system of conditions for $\psi , \phi , \theta$
\begin{equation}\label{eq:40}
\dfrac{\psi + \phi \lambda (1 - \theta)}{1 - \phi \lambda \theta} = e^{\lambda h},\quad
\phi \dfrac{1 - \theta + \psi \theta}{(1 - \phi \lambda \theta)^2} = he^{\lambda h},\quad
\phi \dfrac{1 - \theta + \psi \theta}{(1 - \phi \lambda \theta)^3} \phi \theta = \dfrac{h^2}{2}e^{\lambda h}.
\end{equation}

By some elementary calculations we have found 
\begin{equation}\label{eq:41}
\psi = \dfrac{e^{\lambda h}(2 - \lambda h)}{\lambda h + 2}, \qquad \phi = \dfrac{h(e^{\lambda h} + 1)}{\lambda h + 2}, \qquad \theta = \dfrac{1}{e^{\lambda h} + 1}.
\end{equation}
Clearly, $\phi \lambda \theta \ne 1$ if $h \ne -2/\lambda $.
Notice that when $\lambda_1 = \lambda_2 = \lambda_3 = 0$ we obtain
\begin{equation}\label{eq:42}
\psi = 1, \qquad \phi = h, \qquad \theta = \dfrac{1}{2}.
\end{equation}

\begin{theorem}\label{theorem9}
The linear system \eqref{de} with the coefficient matrix
\begin{equation}\label{eq:43}
J_6 = \begin{pmatrix}
\lambda& 1 & 0\\
0& \lambda& 1\\
0& 0& \lambda
\end{pmatrix},
\end{equation}
has an exact difference scheme of the form \eqref{ds}, where the parameters $\psi, \phi, \theta$ are determined by \eqref{eq:41}.
\end{theorem}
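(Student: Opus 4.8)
The plan is to solve the system of three scalar conditions \eqref{eq:40} for the three unknowns $\psi,\phi,\theta$ and to verify that the solution is precisely \eqref{eq:41}. Once this is established, substituting those values back into \eqref{eq:39} makes it coincide termwise with the exact propagation \eqref{eq:38}, so the scheme \eqref{ds} with coefficient matrix $J_6$ is exact by construction. Thus the whole proof reduces to an explicit (and decoupling) solution of \eqref{eq:40}.

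First I would take the ratio of the third equation of \eqref{eq:40} to the second one: the common factor $\phi(1-\theta+\psi\theta)/(1-\phi\lambda\theta)^2$ cancels and, since $he^{\lambda h}\neq0$ for $h\neq0$, this is legitimate, leaving the single relation $\phi\theta/(1-\phi\lambda\theta)=h/2$. Solving for the product yields $\phi\theta=h/(\lambda h+2)$ and hence $1-\phi\lambda\theta=2/(\lambda h+2)$; in particular $1-\phi\lambda\theta\neq0$ whenever $h\neq-2/\lambda$, so the non-degeneracy hypothesis used to pass from the matrix form to \eqref{eq:39} holds automatically at the solution. Next I would feed $\phi\theta$ and $1-\phi\lambda\theta$ back into the first two equations of \eqref{eq:40}. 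Using $\phi\lambda\theta=\lambda h/(\lambda h+2)$ and $\phi\psi\theta=\psi\,(\phi\theta)=\psi h/(\lambda h+2)$, the first equation becomes $\psi+\lambda\phi=(2e^{\lambda h}+\lambda h)/(\lambda h+2)$ and the second becomes $\phi+h(\psi-1)/(\lambda h+2)=4he^{\lambda h}/(\lambda h+2)^2$. For $\lambda\neq0$ this is a linear $2\times2$ system in $(\psi,\phi)$; eliminating $\phi$ gives $\psi=e^{\lambda h}(2-\lambda h)/(\lambda h+2)$, then $\phi=h(e^{\lambda h}+1)/(\lambda h+2)$, and finally $\theta=(\phi\theta)/\phi=1/(e^{\lambda h}+1)$, which is exactly \eqref{eq:41}. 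The case $\lambda=0$ is handled directly from \eqref{eq:40} (it is also the limit $\lambda\to0$ of \eqref{eq:41}), giving $\psi=1,\ \phi=h,\ \theta=1/2$ as in \eqref{eq:42}; a short Taylor expansion confirms $\phi(h)=h+\mathcal{O}(h^2)$ and $\psi(h)=1+\mathcal{O}(h^2)$, so \eqref{ds} is a genuine NSFD scheme.

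I do not anticipate a serious obstacle: after the ratio trick decouples the nonlinear product $\phi\theta$, everything is elementary linear algebra. The two points that need care are (i) \emph{consistency} of the original nonlinear $3\times3$ system — one must check that the value $\theta=(\phi\theta)/\phi$ recovered at the end, together with the computed $\psi,\phi$, indeed satisfies all three equations of \eqref{eq:40}; this is automatic because equations one and two hold by the back-substitution and the ratio (third/second) was imposed first, so the third follows; and (ii) bookkeeping of the non-degeneracy condition $\phi\lambda\theta\neq1$, equivalently $h\neq-2/\lambda$, under which \eqref{eq:39} was derived, which is satisfied since $1-\phi\lambda\theta=2/(\lambda h+2)\neq0$.
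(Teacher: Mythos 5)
Your proposal is correct and follows essentially the same route as the paper: identify the discrete propagation \eqref{eq:39} with the exact propagation \eqref{eq:38} to obtain the three conditions \eqref{eq:40}, then solve for $\psi,\phi,\theta$. The paper dismisses this last step as ``some elementary calculations,'' whereas you supply them explicitly (the ratio of the third to the second equation giving $\phi\theta = h/(\lambda h+2)$, then back-substitution), arrive at exactly \eqref{eq:41}, and your non-degeneracy check $1-\phi\lambda\theta = 2/(\lambda h+2)\ne 0$ matches the paper's remark that $\phi\lambda\theta\ne 1$ when $h\ne -2/\lambda$.
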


\begin{remark}
The equation \eqref{eq:33} for determining $\psi, \phi, \theta$ in the case of $J_4$ is the first equation in \eqref{eq:40}, and the system \eqref{eq:36} in the case of $J_5$ is the first two equations in \eqref{eq:40} in the case of $J_6$. Therefore, the parameter $\psi, \phi, \theta$ found from  \eqref{eq:40} are applicable for all cases of $J_4$, $J_5$ and $J_6$. It means that in the case if $A$ has the set of eigenvalues $\sigma(A) = \big\{\lambda, \lambda, \lambda \big\}$ then the exact difference scheme can be determined by Theorem \ref{theorem9}.
\end{remark}
\subsection{Summary of results}
The obtained above results of exact difference schemes for the three-dimensional linear system  \eqref{de} with constant coefficient matrix  $A$ can be summarized in Table \ref{tabl2} below

\begin{table}

\caption{The exact difference schemes for $\textbf{x}' = A\textbf{x}$ in dependence on the eigenvalues of $A$}
\label{tabl2}       

\begin{tabular}{llll}
\hline\noalign{\smallskip}
Case & $\sigma(A)$ & $\psi, \phi, \theta$ are determined by \\
\noalign{\smallskip}\hline\noalign{\smallskip}
$1$& $\big\{\lambda_1, \lambda_2, \lambda_3\big\}$  & Theorem \ref{theorem3}\\
Special case $1$& $ \big\{0, \lambda_2, \lambda_3\big\}$& Theorem \ref{theorem4}\\
Special case $2$& $ \big\{\alpha  + \beta i, \alpha  - \beta i,  \lambda \big\}$& Corollary \ref{corollary1}\\
$2$& $\big\{\lambda_1, \lambda_1,  \lambda_2 \big\}$& Theorem \ref{theorem6}\\
$3$& $ \big\{\lambda, \lambda, \lambda \big\}$& Theorem \ref{theorem9}\\
\noalign{\smallskip}\hline
\end{tabular}
\end{table}
Concerning the parameters $\psi$, $\phi$ and $\theta$ as functions of the stepsize of discretization it is easy to verify
\begin{lemma}
For the functions $\psi, \phi$ and $\theta$ determined by theorems in Table \ref{tabl1} we have\\
$(i). \lim_{h \to 0^+} \psi(\lambda_1, \lambda_2, \lambda_3) = 1,$\\
$(ii). \lim_{h \to 0^+} \phi(\lambda_1, \lambda_2, \lambda_3)/h = 1,$\\
$(iii). \lim_{h \to 0^+} \theta(\lambda_1, \lambda_2, \lambda_3) = \dfrac{1}{2}$.
\end{lemma}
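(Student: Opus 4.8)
\emph{Plan of proof.} The statement is a case-by-case verification following the five rows of Table~\ref{tabl2}: in every case the parameters $\psi,\phi,\theta$ are given by explicit formulas in the exponentials $e^{\lambda_i h}$ (and, for the complex row, in $\sin(\beta h),\cos(\beta h)$), so each of the three limits reduces to a leading-order Taylor expansion as $h\to 0^+$, or equivalently to a single application of L'Hospital's rule to the indeterminate form that occurs. I would first dispose of the cases in which the limits follow by direct substitution, and then treat the genuinely indeterminate ones.

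\emph{The direct cases.} For the triple eigenvalue $\sigma(A)=\{\lambda,\lambda,\lambda\}$ the parameters are \eqref{eq:41}: setting $h=0$ gives $\psi\to 2/2=1$, $\phi/h=(e^{\lambda h}+1)/(\lambda h+2)\to 1$, and $\theta=1/(e^{\lambda h}+1)\to 1/2$. For the subcase with a zero eigenvalue one has $\psi\equiv 1$ together with $\phi\equiv h$ (or $\phi/h\to 1$) and $\theta\to 1/2$ by a one-step L'Hospital computation using \eqref{eq:14} and \eqref{eq:p1}. For the Jordan forms $J_2,J_4,J_5$ the parameters are not uniquely determined, and the lemma is understood for the canonical selections recorded in the corresponding remarks (e.g. $\theta=0$, $\phi=he^{\lambda h}$, $\psi=(1-\lambda h)e^{\lambda h}$ for $J_5$; cf. \eqref{eq:19},\eqref{eq:33},\eqref{eq:36}), for which the three limits are immediate; the same check applies to any smooth selection of solutions passing through $(\psi,\phi/h,\theta)=(1,1,1/2)$ at $h=0$.

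\emph{The indeterminate cases.} The real work is in Case~1 (three distinct nonzero eigenvalues), its complex variant (Corollary~\ref{corollary1}), and Case~2 with $J_3$. For Case~1 I would expand $T_1,T_2$ in \eqref{eq:12}: the $h^1$-coefficients of $T_1$ and of $T_2$ vanish identically, while their $h^2$-coefficients are $\tfrac12 V$ and $V$ respectively, where $V=(\lambda_1-\lambda_2)(\lambda_2-\lambda_3)(\lambda_3-\lambda_1)\ne 0$ by the Vandermonde identity $\lambda_1^2(\lambda_2-\lambda_3)+\lambda_2^2(\lambda_3-\lambda_1)+\lambda_3^2(\lambda_1-\lambda_2)=-V$; hence $\theta=T_1/T_2\to 1/2$. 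Since $C_1\to 0$ by \eqref{eq:11} and $e^{\lambda_1 h}-e^{\lambda_2 h}\sim(\lambda_1-\lambda_2)h$, formula \eqref{eq:13} then gives $\phi/h\to(\lambda_1-\lambda_2)/(\lambda_1-\lambda_2)=1$, and $\psi=e^{\lambda_3 h}-\phi\lambda_3(e^{\lambda_3 h}\theta+1-\theta)\to 1$. The complex case runs identically with $e^{\lambda_{1,2}h}$ replaced by $e^{\alpha h}(\cos\beta h\pm i\sin\beta h)$: one checks in \eqref{eq:15} that $T_3\to 0$, that $T_1,T_2=O(h^2)$ with $\theta\to 1/2$, that the numerator of $\phi$ behaves like $2\beta h$ while its denominator tends to $2\beta$, and that $\psi\to 1$. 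For $J_3$ one invokes Proposition~\ref{proposition1} ($\lambda_1 T_1\to 0$), so from \eqref{eq:25} both $\psi\to 1$ and $\phi/h\to 1$; a second-order expansion of \eqref{eq:27} shows the numerator of $T_1$ equals $\lambda_1(\lambda_1-\lambda_2)h^2+O(h^3)$ and its denominator equals $2\lambda_1(\lambda_1-\lambda_2)h+O(h^2)$, so $T_1\sim h/2$ and $\theta=T_1/\phi\to 1/2$.

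\emph{Main obstacle.} The only non-routine point is the bookkeeping of the second-order Taylor coefficients in the three indeterminate cases, and especially confirming that the leading ($O(h)$ or $O(h^2)$) coefficients of the numerators and denominators defining $\theta$ and $\phi$ do not vanish — which is precisely where the hypothesis that the relevant eigenvalues are distinct enters, through the factor $V$ (or $\beta$, or $\lambda_1-\lambda_2$) above. Once this non-degeneracy is established, all the required limits follow.
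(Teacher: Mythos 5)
Your proposal is correct and does exactly what the paper leaves to the reader: the paper states the lemma with no proof beyond ``it is easy to verify,'' and the intended verification is precisely your case-by-case Taylor expansion of the explicit formulas, with the non-vanishing of the leading coefficients guaranteed by the Vandermonde factor $(\lambda_1-\lambda_2)(\lambda_2-\lambda_3)(\lambda_3-\lambda_1)$ (resp.\ $\beta$, $\lambda_1-\lambda_2$); I checked your second-order coefficients for Theorem~3, Corollary~1 and the $J_3$ case and they are right. One small caution: your aside on the underdetermined blocks $J_2,J_4,J_5$ is not needed (Table~2 routes those cases through Theorems~6 and~9, whose parameters are determinate), and for the ``canonical'' selection $\theta=0$, $\phi=he^{\lambda h}$, $\psi=(1-\lambda h)e^{\lambda h}$ that you quote for $J_5$ the limit (iii) is $0$ rather than $1/2$, so the claim that all three limits are ``immediate'' there should be dropped or restricted as in your final hedge.
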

\section{Explicit exact difference schemes (EEDS) for $\textbf{x}' = A\textbf{x}$}
In this section we construct explicit EDS for the system  $\textbf{x}' = A\textbf{x}$ in the form \eqref{r5}. From the previous section and   \cite{Roeger4} we conclude that there exist an EDS for systems with the matrix having a same set of eigenvalues, i.e. EDS does not depend on the Jordan structure of the matrix $A$. Specifically, for all matrices with a same set of eigenvalues, if a difference scheme is exact for a system with matrix having higher minimal polynomial then it is also the exact scheme for a system with matrix having lower minimal polynomials. Therefore, when constructing EDS for systems with matrices having a same set of eigenvalues it suffices to consider the case of matrix having highest minimal polynomial. It is why in this section instead of 6 cases of the matrix  $J$ as in Section 3 we consider only 3 cases of the matrix $J$ as follows.
\subsection{The case when $A$ has 3 distinct eigenvalues}
In this case matrix 
 $A$ is similar to $J_1$. Applying the difference scheme \eqref{r5} for the system $\textbf{x}' = J_1\textbf{x}$ we obtain
\begin{equation}\label{eq:8a}
\begin{split}
x_{k + 1} = \big(\psi + \phi \lambda_1 + \theta \phi^2 \lambda_1^2 \big)x_k,\\
y_{k + 1} = \big(\psi + \phi \lambda_2 + \theta \phi^2 \lambda_2^2\big)y_k,\\
z_{k + 1} = \big(\psi + \phi \lambda_3 + \theta \phi^2 \lambda_3^2 \big)z_k,\\
\end{split}
\end{equation}
Identifying \eqref{eq:8a} and \eqref{eq:7} we come to the system for finding the parameters $\phi, \psi, \theta$:
\begin{equation}\label{eq:9a}
\big(\psi + \phi \lambda_i + \theta \phi^2 \lambda_i^2 \big) = e^{\lambda_i h}, \qquad i = 1, 2, 3.
\end{equation}
Suppose that $\lambda_1 + \lambda_2 \ne 0$. Then by consecutive eliminations it is easy to obtain the solution of the system  \eqref{eq:9a}:
\begin{equation}\label{eq:eeds1}
\begin{split}
\phi &= \dfrac{(\lambda_3^2 - \lambda_2^2)(\lambda_2^2e^{\lambda_1 h} - \lambda_1^2e^{\lambda_2 h}) - (\lambda_2^2 - \lambda_1^2)(\lambda_3^2e^{\lambda_2 h} - \lambda_2^2e^{\lambda_3 h})}{\lambda_1 \lambda_2 (\lambda_2 - \lambda_1)(\lambda_3^2 - \lambda_2^2) - \lambda_2 \lambda_3 (\lambda_3 - \lambda_2)(\lambda_2^2 - \lambda_1^2)},\\
\psi &= \dfrac{\lambda_2^2e^{\lambda_1h} - \lambda_1^2e^{\lambda_2h} - \lambda_1 \lambda_2 (\lambda_2 - \lambda_1)\phi}{\lambda_2^2 - \lambda_1^2}, \qquad \theta = \dfrac{e^{\lambda_3 h} - \psi - \lambda_3 \phi}{\lambda_3^2 \phi^2}.
\end{split}
\end{equation} 

\subsection{The case $A$ has eigenvalues $\lambda_1 = \lambda_2 \ne \lambda_3$}
In this case it suffices to consider the case when matrix 
 $A$ similar to $J_3$. Analoguously as above we obtain a system of conditions for determining the parameters   $\phi, \psi, \theta$:
\begin{equation}\label{eq:eeds2}
\phi + 2 \lambda_1 \theta \phi^2 = h e^{\lambda_1 h}, \qquad \psi + \phi \lambda_i + \theta \phi^2 \lambda_i^2 = e^{\lambda_i h}, \quad i = 1, 2.
\end{equation}
If $\lambda_1 = 0$ then from the system \eqref{eq:eeds2} we obtain
\begin{equation}\label{eq:eeds3}
\psi = 1, \qquad \phi = h, \qquad \theta = \dfrac{e^{\lambda_2 h} - \lambda_2 h - 1}{h^2 \lambda_2^2}.
\end{equation}
Otherwise, if $\lambda_1 \ne 0$ it is easy to get the solution of the system \eqref{eq:eeds2}
\begin{equation}\label{eq:eeds4}
\phi = \dfrac{(\lambda_2^2 h - \lambda_1^2 h + 2 \lambda_1)e^{\lambda_1 h} - 2 \lambda_1 e^{\lambda_2 h}}{(\lambda_1 - \lambda_2)^2}, \, \psi = \dfrac{(2 - \lambda_1 h)e^{\lambda_1 h} - \lambda_1 \phi}{2}, \, \theta = \dfrac{h e^{\lambda_1 h} - \phi}{2 \lambda_1\phi^2}.
\end{equation}

\subsection{The case $A$ has eigenvalues $\lambda_1 = \lambda_2 = \lambda_3 = \lambda$}
In this case it suffices to consider the case when the matrix 
 $A$ is similar to $J_6$. Then the system of conditions for the parameters $\phi, \psi, \theta$ is

\begin{equation}\label{eq:eeds4}
\psi + \phi \lambda + \theta \phi^2 \lambda_2^2 = e^{\lambda h}, \quad \phi + 2 \lambda \theta \phi^2 = h e^{\lambda h}, \quad \theta \phi^2 = \dfrac{h^2}{2}e^{\lambda h}.
\end{equation}
If $\lambda = 0$ from \eqref{eq:eeds4} we obtain $\psi = 1, \phi = h, \theta = \dfrac{1}{2}$. Otherwise, if $\lambda \ne 0$ the solution of \eqref{eq:eeds4} is
\begin{equation}\label{eq:eeds5}
\phi = (h - \lambda h^2)e^{\lambda h}, \qquad  \theta = \dfrac{h^2 e^{\lambda h}}{2 \phi^2}, \qquad \psi = e^{\lambda h} - \lambda \phi - \theta \phi^2 \lambda^2.
\end{equation}
\begin{remark}
From  Section 3 and Section 4 we see that the system of conditions for determining the parameters $\phi, \psi, \theta$ for implicit EDS is much more complicated than for explicit EDS. Specifically, the system of conditions for implicit EDS contains rational expressions while the system of conditions for  explicit EDS contains polynomial expressions.
\end{remark}
\section{Perturbation analysis}
Since the parameters of EDS contain exponential and trigonometric functions, in the process of computation rounding errors arise. Suppose that instead of the exact parameters $\psi, \phi, \theta$ we obtain only their approximate values  $\hat{\psi}, \hat{\phi}, \hat{\theta}$. Notice that the explicit EDS and implicit EDS for the system   $\textbf{x}' = A\textbf{x}$ can be written in the form $\textbf{x}_{k + 1} = Q(\psi, \phi, \theta)\textbf{x}_k := Q\textbf{x}_k$. Due to the fact that the iterative parameters are computed approximately instead of   $Q$ we only have $\hat{Q} :=(\hat{\psi}, \hat{\phi}, \hat{\theta}).$  Suppose 

\begin{equation*}
\hat{Q} = Q + \epsilon T, \qquad T  = (1)_{3 \times 3}.
\end{equation*}
Therefore, we obtain only the approximation $\hat{\textbf{x}_k}$ but not
   $\textbf{x}_k$ . It is easy to obtain the difference between $\hat{\textbf{x}_k}$ and $\textbf{x}_k$
\begin{equation*}
\hat{\textbf{x}_k} - \textbf{x}_k = (\hat{Q}^k - Q^k)\textbf{x}_0 = (Q^{k - 1} \epsilon T + Q^{k - 2} \epsilon^2 T^2 + \ldots + \epsilon^k T^k)\textbf{x}_0.
\end{equation*}
From here it follows
\begin{equation*}
||\hat{\textbf{x}_k} - \textbf{x}_k|| < C \sum_{i = 1}^k\epsilon^i, \qquad C = \Big(\max_{j = \overline{0, k-1}}||Q||^j||T||^{k - j}\Big)||\textbf{x}_0||.
\end{equation*}
We see that the error of computed solution mainly depends on the number of iterations and the rounding errors in computation of the parameters. When the number of iterations is large the error of
EDS may be large. However, this error slightly depends on  $h$, therefore we can overcome this phenomena as follows:  instead of computing $\textbf{x}_{k + 1}$  through  $\textbf{x}_{k}$ by the formula  $\textbf{x}_{k + 1} = Q(\psi(h), \phi(h), \theta(h))\textbf{x}_k$  with grid size $h = T/N$ we can compute $\textbf{x}_{k}$ through $\textbf{x}_0$ with grid size $h^* = t_k$, i.e. use the formula  $\textbf{x}_{k} = Q(\psi(h^*), \phi(h^*), \theta(h^*)\textbf{x}_0$. Equivalently, instead of solving the problem on interval $[0, t_k]$ with $h = T/N$ via a large number of steps we can compute $x_k$ via $x_0$ with the step $h^* = t_k$. Thus, after only one iteration we obtain  $\textbf{x}_{k}$. This is the advantage of EDS compared with high-order numerical methods because for ensuring the accuracy these methods must use small grid sizes. From the numerical examples in the next section it will be seen that even in the presence of rounding errors EDS are more efficient than high-order numerical methods.

 \section{Numerical simulations}
In this Section we perform some numerical simulations for confirming the validity of theoretical results obtained in the previous sections. The numerical simulations for a stiff problem and problems with specific properties demonstrate the advantage of EDS over high-order numerical methods.
 \begin{example}
Consider the system \eqref{de} with the coefficient matrix
\begin{equation*}
A = 
\begin{pmatrix}
21& -8& -19\\
18& -7& -15\\
16& -6& -15
\end{pmatrix}.
\end{equation*}
\end{example}
The set of eigenvalues of $A$ is $\sigma(A) = \{-1, \pm i\}$. For the initial conditions $x(0) = 0, y(0) = -50, z(0) = 50$, the system has the exact solution
 \begin{equation*}
 \begin{split}
x(t) = 100e^{-t} - 100\cos(t) - 450\sin(t),\\
 y(t) = 150\cos(t) - 200e^{-t} - 600\sin(t),\\
z(t) = 200e^{-t} - 150\cos(t) - 250\sin(t).
 \end{split}
 \end{equation*}
The implicit exact difference scheme for the system are determined from Theorem \ref{theorem4}.  The exact solution of the system and the the solution of the exact difference schemes are depicted in Figures \ref{fig:1}.
 \begin{figure}
  \includegraphics[width=1.11\textwidth]{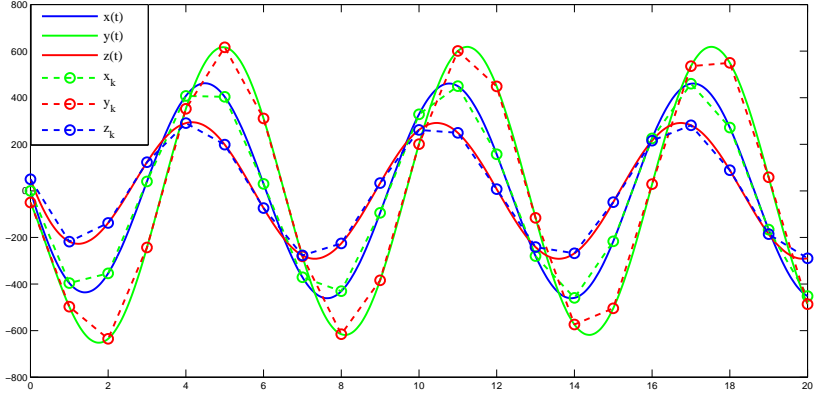}
\caption{Exact solutions and Implicit exact difference scheme }
\label{fig:1}      
\end{figure}
\begin{example}
Consider the system \eqref{de} with the coefficient matrix
\begin{equation*}
A = 
\begin{pmatrix}
3& -1& -3\\
-6& 2& 6\\
6& -2& -6
\end{pmatrix}.
\end{equation*}
\end{example}
The set of eigenvalues of $A$ is $\sigma(A) = \{0, -1\}$. For the initial conditions $x(0) = 0, y(0) = -40, z(0) = 50$, the system has the exact solution
 \begin{equation*}
x(t) =110e^{-t} - 110,\quad y(t) = 180 - 220e^{-t}, \quad z(t) = 220e^{-t} - 170.
 \end{equation*}
The explicit exact difference schemes for the system are determined by Subsection 4.2.  The exact solution of the system and the the solution of the exact difference schemes are depicted in Figures \ref{fig:2}.\par
 \begin{figure}
  \includegraphics[width=1.11\textwidth]{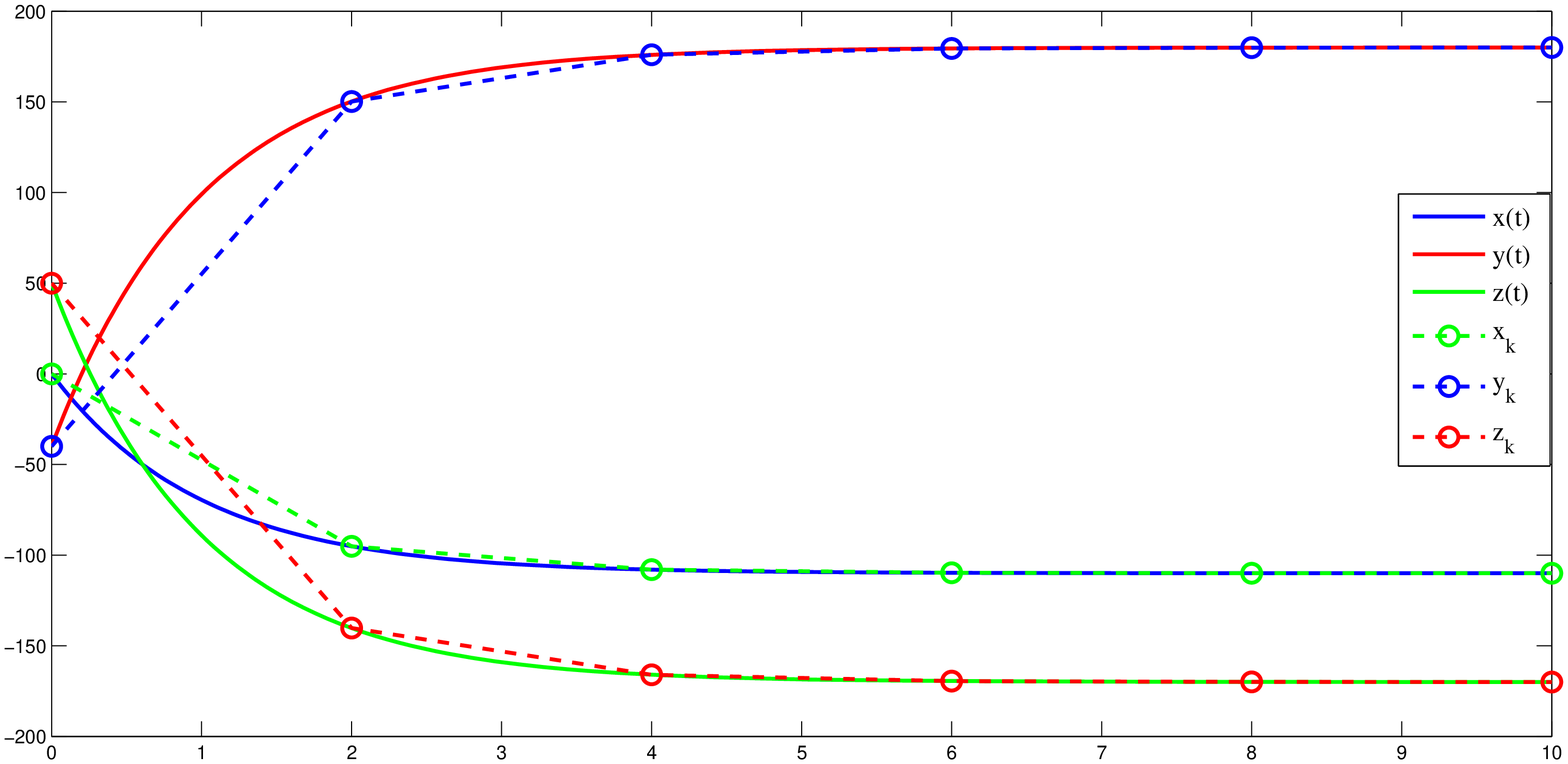}
\caption{Exact solutions and Explicit exact difference scheme}
\label{fig:2}      
\end{figure}
From the two above numerical examples we see that the solution of the constructed EDS \textbf{almost coincide}  with the  exact solution of the system of differential equations at grid points due to the insignificant rounding errors in computing the parameters of EDS. In the ideal case when the rounding errors are absent the solution of EDS must coincide with the solution of the system of differential equations for any grid size $h$. 
\begin{example}
Consider the system $\textbf{x}' = A\textbf{x}$, $t \in [0, T]$ with the coefficient matrix
\begin{equation*}
A = 
\begin{pmatrix}
0& -1& 0\\
1& 0& 0\\
0&0 & \lambda
\end{pmatrix},
\end{equation*}
\end{example}
where $\lambda > 0$. The set of eigenvalues of $A$ is $\sigma(A) = \{\pm i,  \lambda\}$. For the initial conditions $x(0) = 1, y(0) = 0, z(0) = 1$ and $\lambda = 1$, the system has the exact solution $x(t) = \cos(t)$, $y(t) = \sin(t)$, $z(0) = e^{\lambda t}$. The components $x(t)$, $y(t)$ are periodic and 
$x^2(t) + y^2(t) = 1$ for all $t \in [0, T]$. It is easy to proved that (see \cite[Problem 1, Section 3.9]{Ascher}):
\begin{enumerate}
\item The solution  $(x_k, y_k)$ obtained by the explicit Euler method  spirals out.
\item The solution $(x_k, y_k)$  obtained by the implicit Euler method  spirals in.
\item The solution $(x_k, y_k)$  obtained by the trapezoidal method  forms an approximate
circle as desired.
\end{enumerate}
In general, many numerical methods of higher order of accuracy such as Runge-Kutta or Taylor methods cannot preserve invariant properties of the differential problems although their global errors tend to zero as $h \rightarrow 0 $. It means that the choice of small grid size only ensures  the accuracy of the methods but not ensure the invariant properties of the problems. For the methods of higher order of accuracy including one step methods and multistep methods when solving the problems numerically  it is necessary to take grid size small because all convergence theorems are stated for $h \rightarrow 0 $. Therefore, when the final time  $T >>1$ it is impossible to choose grid size very small because the number of steps of computation becomes extremely large and this may cause difficulty with computer memory and computation time. Moreover, when $h$ is very small the accuracy may decrease due to rounding errors.\par

Now we compare the accuracy of EDS with some typical higher order methods such as Runge-Kutta and Taylor methods \cite{Ascher, Hairer1, Hairer2}.

In this example $\lambda > 0$, therefore the problem is unstable and it is not necessary to use implicit A-stable or L-stable Runge-Kutta methods. Moreover, since the system is linear, implicit schemes are easily reduced to explicit ones, therefore, we shall use the classical four-stage Runge-Kutta method. Besides, we consider the Taylor method of $5$ order of accuracy



Suppose, we need to find the approximate value for the exact solution at the time $t = T$. The comparison of errors of the methods are given in Table \ref{tabl1}, where  
 $error = |x_N - x(t_N)| + |y_N - y(t_N)| +  |z_N - z(t_N)|$ is used as a measure of accuracy of methods, 
  $x_N, y_N, z_N$ are computed solution  by methods with the grid size   
   $h = T/N$, $x(t_N), y(t_N), z(t_N)$ are values of the exact solution at $t_N = Nh = T$. 
For methods of higher order of accuracy we take small grid size for guaranteeing accuracy and convergence, but for EDS it is not needed. Hence, we use the grid size $h = T$ for avoiding the 
decrease of accuracy after a large number of iterations due to rounding errors.\par 
\begin{table}
\caption{Error of the methods}
\label{tabl1}       
\begin{tabular}{lllllll}
\hline\noalign{\smallskip}
$T, \lambda$&$h$&IEDS error&EEDS error&RK4 error&Taylor error&Trapezoidal error\\
\hline\noalign{\smallskip}
$1, 1$&$10^{-5}$&3.2618e-011&3.8608e-011&3.1419e-014&3.4646e-011&1.0599e-010\\
&$10^{-4}$&1.5561e-012&1.2415e-012&1.2990e-014&1.5561e-012&3.4218e-009\\
&$10^{-3}$&4.9460e-013&8.1424e-013&3.3751e-014&1.2468e-013&3.4167e-007\\
&$10^{-2}$&4.5852e-014&4.6851e-014&3.4000e-010&2.9421e-014&3.4167e-005\\
&$10^{-1}$&3.2196e-015&7.7716e-015&3.2526e-006&7.7251e-010&0.0034\\
&$1$&7.7716e-016&1.1102e-016&0.0195&4.4648e-004&0.3829\\
\hline\noalign{\smallskip}
$10, 10^{-1}$&$h = 10^{-5}$& 1.0909e-010&4.9326e-011&1.3178e-013&1.0909e-010&2.1743e-010\\
&$h = 10^{-4}$&3.0020e-011&4.3151e-011&8.3267e-015&3.7715e-011&1.1621e-008\\
&$h = 10^{-3}$&1.3045e-012&1.0316e-012&1.3023e-013&1.3045e-012&1.1548e-006\\
&$h = 10^{-2}$&7.4307e-013&1.6542e-013&1.1505e-009&1.3267e-013&1.1548e-004\\
&$h = 10^{-1}$&3.7637e-014&3.7970e-014&1.1280e-005&2.6824e-009&0.0115\\
&$h = 1$&2.9976e-015&3.4417e-015&0.0995&0.0025&0.8470\\
&$h = 10$&1.3323e-015&1.4433e-015&524.6383&1.6976e+003&1.2944\\
\hline\noalign{\smallskip}
$10^{2}, 10^{-2}$&$h = 10^{-4}$&1.3930e-010&1.0862e-010& 8.1490e-014&1.3930e-010&1.1415e-007\\
&$h = 10^{-3}$&3.5170e-011&4.5318e-011&1.1456e-012&3.5170e-011&1.1406e-005\\
&$h = 10^{-2}$&2.5135e-012&2.1225e-012&1.1430e-008&1.3967e-012&0.0011\\
&$h = 10^{-1}$&1.9151e-013&7.0943e-013&1.1612e-004&2.7668e-008&0.1150\\
&$h = 1$& 3.3640e-014&6.0507e-014&0.6364&0.0230&1.3021\\
&$h = 10$&1.7208e-014&9.6589e-015&1.4626e+026&1.0099e+031&2.7849\\
&$h = 10^{2}$&1.1102e-016&3.3307e-016&4.3282e+006&1.4679e+009&2.6896\\
\hline\noalign{\smallskip}
$10^{3}, 10^{-3}$&$h = 10^{-4}$&1.1436e-010&6.5578e-009&1.7082& 4.8609e-009&1.1592e-006\\
&$h = 10^{-3}$&1.1436e-010&7.7965e-010&1.1645e-011&1.1436e-010&1.1577e-004\\
&$h = 10^{-2}$&3.9845e-011&5.0336e-011&1.1595e-007&3.6106e-011&0.0116\\
&$h = 10^{-1}$&2.0014e-012&7.6292e-012&0.0012&2.7916e-007&1.1135\\
&$h = 1$&7.6816e-013&7.4385e-014&1.3873&0.2445&2.7559\\
&$h = 10$&1.0358e-013&6.9056e-014&2.0312e+260&Inf&2.5752\\
&$h = 10^{2}$&3.2196e-015&4.4409e-015&2.0587e+066&3.6683e+091&1.5772\\
&$h = 10^{3}$&5.5511e-016&4.4409e-016&4.1833e+010&1.3972e+015&2.6670\\
\hline\noalign{\smallskip}
$10^4, 10^{-4}$&$h = 10^{-3}$&2.1401e-009&4.7583e-009&1.6131&4.5953e-009&0.0010\\
&$h = 10^{-2}$&1.5582e-010&1.6500e-010&1.0436e-006&1.2046e-010&0.1024\\
&$h = 10^{-1}$&3.3033e-011&3.9898e-011&0.0100&2.3738e-006&2.4012\\
&$h = 1$&6.3882e-012&1.0866e-011&1.2578&5.2064&2.0189\\
&$h = 10$&7.4413e-013&1.0292e-013&NaN&NaN&2.6229\\
&$h = 10^{2}$&3.3529e-014&5.6566e-014&NaN&NaN&1.3602\\
&$h = 10^{3}$&6.0507e-015&3.4417e-015&1.6389e+106&2.8260e+151&2.2193\\
&$h = 10^{4}$&1.6653e-016&1.1102e-016&4.1683e+014&1.3897e+021& 0.6356\\
\hline\noalign{\smallskip}
$10^5, 10^{-5}$&$h = 10^{-2}$&8.3200e-009& 2.8834e-009&8.6926e-006&4.6617e-009& 1.0818\\
&$h = 10^{-1}$&2.3169e-010&9.1972e-011&0.0952& 2.2129e-005&1.9557\\
&$h = 1$&3.2853e-011&4.3130e-011&1.0351&3.8828e+006&1.0811\\
&$h = 10$&2.0601e-011&7.6230e-012&NaN&NaN&1.4332\\
&$h = 10^{2}$&3.2153e-013&2.0207e-013&NaN&NaN&1.1208\\
&$h = 10^{3}$&5.5303e-014&5.1750e-014&NaN&NaN&2.3456\\
&$h = 10^{4}$&3.4431e-014&5.6760e-015&1.5835e+146&2.6870e+211&2.0414\\
&$h = 10^{5}$&4.9544e-015&1.1102e-016&4.1668e+018&1.3890e+027&0.3181\\
\hline\noalign{\smallskip}
\end{tabular}
\end{table}
From Table \ref{tabl1}, where IEDS and EEDS  stand for Implicit and Explicit Exact Difference Schemes, respectively, we see that the methods of higher order of accuracy have small errors when $T$ and grid size $h$ are small. But when $T$ is large, despite small grid size $h$, the accuracy of these methods decreases due to the accumulation of rounding errors after a large number of iterations. This occurs because for computing the approximate value of the exact solution at the final time $T$ it is needed to compute consecutively the approximate values of the exact solution at every time before $T$. For example, for the fourth order Runge-Kutta method, theoretically, in order to obtain the approximate value of the solution at $T=1$ with the accuracy $10^{-16}$ we have to choose the grid size $h = 10^{-4}$, and perform $10.000$ iterations. Nevertheless, in practice, the actual accuracy reached is only $10^{-14}$ due to the decrease of accuracy as the result of accumulation of rounding errors. Similar situation also occurs with the Taylor methods and other methods of higher order accuracy. Meanwhile, for EDS, if taking grid size $h = T=1$ then after exactly one step we obtain the approximate solution at the time $t_N = T$ with the accuracy $10^{-16}$. This completely agrees with the analysis in Section 5. 

Besides, from Table \ref{tabl1} it is easily seen that for other values of $T \ge 10$ the accuracy of EDS depends slightly on grid sizes $h$ and it is best if $h=T$. In nature, it depends on the rounding errors of computation of the parameters of the schemes and the number of iterations. Meanwhile, from the table we also see that for large time interval the higher order methods are inefficient, even are impossible.\par 
From the above example we can conclude that for the problems on large time intervals the exact difference schemes (implicit or explicit) are more efficient than  higher order methods.

\begin{example}(Stiff problem)\\
Consider the system $\textbf{x}' = A\textbf{x}$, $t \in [0, T]$ with the coefficient matrix
\begin{equation*}
A = 
\begin{pmatrix}
-1&0&0\\
0& -2& 0\\
0&0& -100\\
\end{pmatrix}.
\end{equation*}
\end{example}
The set of eigenvalues of $A$ is $\sigma(A) = \{-1, -2, -100\}$. For the initial conditions $x(0) = 1, y(0) = 1, z(0) = 1$, the system has the exact solution $x(t) = e^{-t}$, $y(t) = e^{-2t}$, $z(t) = e^{-2017t}$. Obviously, all the components of the solution monotonically tend to zero with the exponential rate. As is well known, for efficiently solving stiff problems it is necessary to use methods having stability properties such as A-stability or L-stability (see \cite{Ascher, Hairer1, Hairer2}. Some implicit Runge-Kutta methods possess these stability propoerties, while explicit Runge-Kutta methods cannot have L-stability since they have bounded stability regions. In general for stiff problems explicit methods are inefficient. \\
In this example we compare EDS with the classical four-stage Runge-Kutta method, five-order Taylor method and five-order Radau IIA method \cite[Table II.7.7]{Hairer1}. The results of computation are reported in Table \ref{tabl3}, where

 
\begin{equation*}
error = \max_k\Big\{{|x(t_k) - x_k|} + {|y(t_k) - y_k|} +  {|z(t_k) - z_k|}\Big\} 
\end{equation*}
is a measure of accuracy of methods.
\begin{table}
\caption{ Error of the methods}
\label{tabl3}       
\begin{tabular}{ccccccc}
\hline\noalign{\smallskip}
$T$&$h$&IEDS error&EEDS error&RK4 error&Taylor error&Radau IIA error\\
\hline\noalign{\smallskip}
$10^{-3}$&$10^{-6}$&6.6613e-016&6.6613e-016&2.4425e-015&2.7756e-014&2.4425e-015\\
&$10^{-5}$&5.5511e-016&6.6613e-016&1.1102e-015&8.5487e-015&6.6613e-016\\
&$10^{-4}$&5.5511e-016&3.3307e-016&7.6037e-012&5.5511e-016& 1.5543e-015\\
&$10^{-3}$&2.2204e-016& 2.2204e-016&8.1964e-008& 1.9596e-011&1.2359e-010\\
\hline\noalign{\smallskip}
$10^{-2}$&$10^{-6}$&1.1102e-015&6.6613e-016&8.6597e-015&2.6584e-013&8.6597e-015\\
&$10^{-5}$&1.0547e-015&6.6613e-016&6.6613e-015&3.8858e-014&3.3307e-015\\
&$10^{-4}$&8.8818e-016&4.9960e-016&3.0913e-011& 3.0531e-015&5.7732e-015\\
&$10^{-3}$&6.1062e-016&4.9960e-016&3.3324e-007&7.9673e-011& 5.0249e-010\\
&$10^{-2}$&2.7756e-016&2.2204e-016&0.0071&1.7611e-004& 4.5087e-005\\
\hline\noalign{\smallskip}
$10^{-1}$&$10^{-6}$&8.5165e-015&2.9616e-015&3.4529e-014&2.2968e-012&3.4418e-014\\
&$10^{-5}$&7.8753e-015&2.6691e-015&1.1999e-014&6.8204e-014&1.1991e-014\\
&$10^{-4}$&7.2122e-015&2.7515e-015&3.0913e-011&9.2176e-015&5.7732e-015\\
&$10^{-3}$&4.0069e-015&1.8644e-015&3.3324e-007&7.9673e-011&5.0249e-010\\
&$10^{-2}$&3.6078e-015& 1.2257e-015&0.0071& 1.7611e-004&4.5087e-005\\
&$10^{-1}$&3.6078e-015&4.3819e-016&291.0000&846.5555&0.0517\\
\hline\noalign{\smallskip}
1&$10^{-5}$&4.5214e-014&7.6050e-015&1.3323e-014&2.4566e-013&1.3212e-014\\
&$10^{-4}$&4.1633e-014&7.3841e-015&3.0913e-011& 2.3925e-014&5.7732e-015\\
&$10^{-3}$&4.1633e-014&7.2164e-015&3.3324e-007&7.9673e-011&5.0249e-010\\
&$10^{-2}$&2.3564e-014&4.7699e-015&0.0071&1.7611e-004&4.5087e-005\\
&$10^{-1}$&1.6376e-014&3.7192e-015& 4.3544e+024&1.8904e+029&0.0517\\
&$1$&5.2180e-015&1.1102e-016& 4.0049e+006&1.3096e+009&0.0264\\
\hline\noalign{\smallskip}
\end{tabular}
\end{table}
Analogously as  Example 3, from Table 
 \ref{tabl3} we see that EDS are much more efficient than RK4 and Taylor methods. Although 
    Radau IIA gives errors better than RK4 and Taylor but it is implicit method, therefore,  it requires more computational cost due to the multiplication of matrix by vector for determining stages of the method. In general, the computational cost of higher order methods is much more than one of EDS.
%
\begin{example}(Nonstandard finite difference scheme of combined type for quasi-nonlinear system of differential equations)\\
Consider the quasi-nonlinear system of equations (see Problem 22.9 \cite{Agarwal1})
\begin{equation}\label{eq:quasilinear}
v' = Av + g(t, v),
\end{equation}
where $g \in C\Big[[t_0, \infty] \times \mathbb{R}^n, \mathbb{R}^n\Big]$ and $||g(t, v)|| \leq \lambda(t)||v||$, where $\lambda(t)$ is a nonnegative continuous function in $[x_0, \infty]$. Additionally, suppose that the function  $\lambda(t)$ satisfies the condition $\int^\infty\lambda(t)dt < \infty$, $\lambda(t) \to 0$ as $t \to \infty$ and the matrix $A$ satisfies $\lambda < 0$ for any $\lambda \in \sigma(A)$. Then it is easy to prove that
 \cite{Agarwal1} every solution of \eqref{eq:quasilinear} is bounded and the trivial solution is asymptotically stable.\par
 Our objective is to construct difference scheme preserving the properties of  \eqref{eq:quasilinear} for any grid size $h > 0$. It should be emphasized that standard finite difference schemes cannot preserve properties of differential equations for any  $h > 0$ \cite{mickens1, mickens2, mickens4}. In this example the properties of the problem are decided by the matrix  $A$, therefore, in the simplest way we propose NSFD scheme for 
 \eqref{eq:quasilinear} in the form
\begin{equation}\label{eq:quasiliner1}
\dfrac{v_{k + 1} - v_k}{\phi} = U(A, v_k, h) + f(t_k, v_k),
\end{equation}
where $U(A, v_k, h)$ is determined so that the scheme
 $\dfrac{v_{k + 1} - v_k}{\phi} = U(A, v_k, h)$ is exact for the system $v' = Av$. Then, by Theorem 5.3.1 in \cite{Agarwal2} the properties of  the problem are preserved for any  $h > 0$. Of course, for ensuring the accuracy of the scheme it is needed to choose   $h < < 1$.
The scheme \eqref{eq:quasiliner1} has only first order of accuracy, nevertheless it prompts us of a way for constructing NSFD schemes of higher order of accuracy for  \eqref{eq:quasilinear}  in the form
\begin{equation}\label{eq:quasiliner1}
\dfrac{v_{k + 1} - v_k}{\phi} = U(A, v_k, h) + V(f, v_k, t_k, h),
\end{equation}
where $U(A, v_k, h), V(f, v_k, h)$ is determined so that the scheme $\dfrac{v_{k + 1} - v_k}{\phi} = U(A, v_k, h)$ and the scheme $\dfrac{v_{k + 1} - v_k}{\phi} = V(f, v_k, h)$ consecutively are EDS for $v' = Av$ and a scheme of higher order of accuracy for $v' = f(t, v)$. In near future we will develop this idea for constructing NSFD scheme of higher order of accuracy preserving the properties of the general quasi-linear system of differential equations.
\end{example}

\section{Conclusion}
%

In this paper, based on the technique of Mickens and Roeger of exact difference schemes, we have constructed implicit and explicit exact difference schemes (EDS) for system of three linear differential equations with constant coefficients $\textbf{x}' = A\textbf{x}$. We have done the perturbation analysis for showing the advantage of EDS over higher order methods when solving problems on large time intervals. Numerical experiments for several problems, especially, a stiff problem and a periodic problem on large time intervals confirm the advantages of constructed EDS over higher order accuracy methods. In the future we will extend the obtained results to general linear system of $n$ equations with constant coefficients and for constructing NSFD scheme of higher order of accuracy preserving the properties of the general quasi-linear system of differential equations $x' = Ax + f(x)$.
\section*{Acknowledgments}
We would like to thank the reviewers for their helpful comments and suggestions for improving the quality of the paper.\\
This work is supported by Vietnam National Foundation for Science and Technology
Development (NAFOSTED) under the grant  number 102.01-2014.20.


\begin{thebibliography}{00}
\bibitem{AL} 
Anguelov, R., Lubuma, J. M. S.: { Nonstandard finite difference method by nonlocal approximations}. {Mathematics and Computers in Simulation} {\textbf{61}}, 465-475 (2009).
\bibitem{Ascher} 
Ascher, U. M., Petzold, L. R.: Computer Methods for Ordinary Differential Equations and Differential-Algebraic Equations, Philadelphia 1998.
\bibitem{dk1} 
Dimitrov, T. D., Kojouharov, H. V.: { Stability-preserving finite-difference methods for general multi-dimensional autonomous dynamical systems}. Int. J. Numer. Anal. Model.  {\textbf{{4}}} (2) (2007),  282-292 (2007).
\bibitem{Hairer1}
Hairer, E., Norsett, S. P., Wanner. G.: Solving Ordinary Differential Equation I, Nonstiff Problems, Springer-Verlag, Berlin 1987.
\bibitem{Hairer2}
Hairer, E., Wanner, G.: Solving Ordinary Differential Equation II, Stiff and Differential - Algebraic-Problems, Springer-Verlag, Berlin 1991.
\bibitem{KW}
 Kaye, R., Wilson, R.:{ Linear Algebra}. Oxford Science Publications, $(1998)$.
\bibitem{Lapinska}
Lapinska-Chrzczonowicz, M., Matusa, P.: Exact difference schemes for a two-dimensional convection-diffusion-reaction equation, Computers and Mathematics with Applications, \textbf{67}, 2205-2217 (2014).


\bibitem{Agarwal1}
 Agarwal, R. P.: An Introduction to Ordinary Differential Equations, Springer, 2000.

\bibitem{Agarwal2}
Agarwal, R. P.: Difference Equations and Inequalities: Theory, Methods, and Applications, Marcel Dekker, Inc, 1998. 
\bibitem{mickens1} 
Mickens, R. E.:{ Nonstandard Finite Difference Models of Differential Equations}. World Scientific, Singapore (1994).
\bibitem{mickens2} 
Mickens, R. E.: { Applications  of Nonstandard Finite Difference Schemes}. World Scientific, Singapore (2000).
\bibitem{Mickens3} 
Mickens, R. E., Oyedeji, K., Rucke, S.: Exact finite difference scheme for second-order linear ODEs having constant coefficients, Journal of Sound and Vibration, \textbf{287}, 1052-1056 (2005).
\bibitem{Mickensp2}
Mickens, R. E.: Exact finite difference schemes for two-dimensional advection equations, Journal of Sound and Vibration \textbf{207}(3), 426-428 (1997).
\bibitem{mickens4}
 Mickens, R. E.: { Nonstandard Finite Difference Schemes for Differential Equations}. {Journal of Difference Equations and Applications} {\textbf{8}}(9), 823-847 (2008).
\bibitem{Roeger2} 
Roeger, L. I. W.:{ Nonstandard finite difference schemes for differential equations with $n + 1$ distinct fixed - points}. Journal of Difference Equations and Applications {\textbf{15}}, 133-151 (2009).
\bibitem{Roeger3} 
Roeger, L. I. W.:{ Exact nonstandard finite-difference methods for a linear system-the case of centers}. Journal of Difference Equations and Applications {\textbf{14}}, 381-389 (2008).
\bibitem{Roeger4} 
Roeger, L. I. W.:{ Exact finite-difference schemes for two-dimensional linear systems with constant coefficients}. Journal of Computational and Applied Mathematics {\textbf{219}}, 102-109 (2008).
\bibitem{Roeger5}
Roeger, L. I. W.: { Dynamically Consistent Discrete-Time Lotka-Volterra Competition Models.
Discrete and Continuous Dynamical Systems}, Proceedings of the 7th AIMS International
Conference, Arlington, 650-658 Supplement (2009).
\bibitem{Roeger6} 
Roeger, L. I. W.:{ Periodic solutions preserved by nonstandard finite-difference schemes for Lotka-Volterra system: a different approach}. Journal of Difference Equations and Applications {\textbf{14}}, 481-493 (2008).
\bibitem{Roeger7} 
Roeger, L. I. W.:{ Nonstandard finite-difference schemes for the Lotka-Volterra systems: generalization of Mickens's method}. Journal of Difference Equations and Applications {12} (9), 937-948 (2006).
\bibitem{Roeger8} 
Roeger, L. I. W., Mickens, R. E.: {Exact finite-difference schemes for first order differential equations having three distinct fixed-points}. Journal of Difference Equations and Applications {13} (12), 1179-1185 (2007).
\bibitem{Roeger9} 
Roeger, L. I. W., Mickens, R. E.: { Exact finite difference and non-standard finite difference schemes for $dy/dt = -\lambda x^{\alpha}$}. Journal of Difference Equations and Applications {18} (9), 1511-1517 (2012).
\bibitem{Roegerp1}
Roeger, L. I. W., Mickens, R. E.: Exact finite-difference schemes for first order differential equations having three distinct fixed-points, Journal of Difference Equations and Applications, \textbf{13}(12), 1179-1185 (2007).
\bibitem{Roegerp3}
Roeger, L. I. W., Mickens, R. E.:  Exact finite difference scheme for linear differential equation with constant coefficients, Journal of Difference Equations and Applications, \textbf{19}(10), 1663-1670 (2013).
\bibitem{SW} 
Weintraub, S. H.:{ Jordan Canonical Form: Application to Differential Equations}, Morgan \&CLaypool publishers (2008).
 \bibitem{Zibaei}
 Zibaei, S.,  Zeinadini, M., Namjoo, M.: Numerical solutions of Burgers-Huxley equation by exact finite 
difference and NSFD schemes, \textbf{22}(8), 1098-1113 (2016).
\end{thebibliography}
\end{document}